\documentclass[12pt,reqno]{amsart}
\usepackage{indentfirst, amssymb, amsmath, amsthm, mathrsfs, setspace, indentfirst, enumerate,  mathrsfs, amsmath, amsthm}
\usepackage[bookmarksnumbered, colorlinks, plainpages]{hyperref}
\usepackage{mathrsfs}
\usepackage{cite}

\newtheorem*{theoA}{Theorem A}
\newtheorem*{theoB}{Theorem B}
\newtheorem*{theoC}{Theorem C}
\newtheorem*{theoD}{Theorem D}

\newtheorem*{cor A}{Corollary A}
\newtheorem*{cor B}{Corollary B}

\newtheorem{theo}{Theorem}[section]
\newtheorem{lem}{Lemma}[section]

\newtheorem{defi}{Definition}[section]

\newcommand{\ol}{\overline}
\newcommand{\be}{\begin{equation}}
\newcommand{\ee}{\end{equation}}
\newcommand{\beas}{\begin{eqnarray*}}
\newcommand{\eeas}{\end{eqnarray*}}
\newcommand{\bea}{\begin{eqnarray}}
\newcommand{\eea}{\end{eqnarray}}

\numberwithin{equation}{section}
\begin{document}
\title[C\MakeLowercase {oefficient Bounds and growth estimates for a Class of}......]{\LARGE C\Large\MakeLowercase {oefficient Bounds and growth estimates for a Class of Pluriharmonic Mappings in unit polydisk}}

\date{}
\author[S. M\MakeLowercase{ajumder}, G. H\MakeLowercase{aldar}, D. P\MakeLowercase{ramanik} \MakeLowercase{and} S. P\MakeLowercase{anja}]{S\MakeLowercase{ujoy} M\MakeLowercase{ajumder}$^1$, G\MakeLowercase{outam} H\MakeLowercase{aldar}$^2$ $^*$, D\MakeLowercase{ebabrata} P\MakeLowercase{ramanik}$^3$ \MakeLowercase{and} S\MakeLowercase{hantanu} P\MakeLowercase{anja}$^4$}

\address{$^1$Sujoy Majumder, Department of Mathematics, Raiganj University, Raiganj, West Bengal-733134, India.}
\email{sm05math@gmail.com, sjm@raiganjuniversity.ac.in}

\address{$^2$Goutam Haldar, Department of Mathematics, Ghani Khan Choudhury Institute of Engineering and Technology, Narayanpur, Malda-732141, West Bengal, India.}
\email{goutamiitm@gmail.com, goutamiit1986@gmail.com}

\address{$^3$Debabrata Pramanik, Department of Mathematics, Raiganj University, Raiganj, West Bengal-733134, India.}
\email{debumath07@gmail.com}

\address{$^4$Shantanu Panja, Department of Mathematics, University of Kalyani, West Bengal 741235, India.}
\email{panjasantu07@gmail.com}

\renewcommand{\thefootnote}{}
\footnote{2010 \emph{Mathematics Subject Classification}: 32A10, 30C45, 30C62, 30C75.}
\footnote{\emph{Key words and phrases}: Pluriharmonic mappings, polydisk, Several complex variables, sharp coefficient bounds and growth estimates.}
\footnote{*\emph{Corresponding Author}: Goutam Halder.}

\renewcommand{\thefootnote}{\arabic{footnote}}
\setcounter{footnote}{0}

\begin{abstract}
In this paper, we first introduce and study the class $\mathcal{P}_{\mathcal{H}_n^0}(M)$ of normalized pluriharmonic mappings, characterized by a specific bound on the sum of their second-order partial derivatives. We prove a one-to-one correspondence between this pluriharmonic class and a class of holomorphic functions, extending the known result of  Ghosh and Vasudevarao \cite{Ghosh-Allu-2020} to the setting of several complex variables.  Finally, we provide sharp coefficient bounds and growth estimates for functions in the class $\mathcal{P}_{\mathcal{H}_n^0}(M)$. 
\end{abstract}
\thanks{Typeset by \AmS -\LaTeX}
\maketitle

\section{\bf Introduction}
Pluriharmonic mappings are a natural and powerful generalization of harmonic functions from one complex variable to multiple complex variables. In simple terms, a real-valued function $u$ on a domain in $\mathbb{C}^n$ is pluriharmonic if its restriction to any complex line is harmonic.

Because they sit squarely at the intersection of complex analysis, differential geometry, and mathematical physics, they have several elegant applications.

In the theory of surfaces, harmonic mappings are famously used to parameterize minimal surfaces (surfaces with zero mean curvature, like soap films) via the Weierstrass-Enneper representation.

$\bullet$ Higher Dimensions: Pluriharmonic mappings allow mathematicians to extend these concepts to higher dimensions. They are used to study and construct pluriharmonic maps from K\"{a}hler manifolds into Riemannian manifolds.

$\bullet$ Hermitian Geometry: They help characterize specific geometric structures, such as identifying when a metric on a complex manifold can be considered K\"{a}hler or conformally K\"{a}hler.

Pluriharmonic functions serve as the ``backbone'' of potential theory in higher dimensions.

$\bullet$ Levy Problems and Pseudoconvexity: They are deeply tied to the study of plurisubharmonic functions, which define the boundaries of domains of holomorphy (pseudoconvex domains).

$\bullet$ Dirichlet Problem: They are used to solve boundary value problems on complex manifolds, helping to understand how holomorphic functions behave near the boundaries of high-dimensional shapes.

In theoretical physics, particularly in sigma models and string theory, fields are often modeled as maps between geometric spaces.
Supersymmetry and K\"{a}hler Geometry: Harmonic and pluriharmonic maps appear when physicists look for energy-minimizing configurations (ground states) of strings moving through a curved spacetime target space, especially when that target space has a K\"{a}hler structure.

In applied mathematics, pluriharmonic mappings find a home in the mechanics of materials.
Deformation Modeling: Because pluriharmonic maps preserve certain structural and geometric alignments, they are used to model ideal, energy-minimizing deformations in multi-dimensional elastic bodies where complex structures naturally arise.

\subsection{{\bf Basic Notations in several complex variables}}
The absolute value of a complex number $z_1$ is denoted by $|z_1|$ and for $z=(z_1,\ldots,z_n)\in\mathbb{C}^n$, we define
\begin{align*}
||z||^2=\sum\limits_{k=1}^n|z_k|^2\quad \text{and}\quad ||z||_{\infty}=\max\limits_{1\leq i\leq n}|z_i|.
\end{align*}

\smallskip
A multi-index $\alpha=(\alpha_1,\ldots,\alpha_n)$ of dimension $n$ consists of n non-negative integers $\alpha_j,\;1\leq j\leq n$; the degree of a multi-index $\alpha$ is the sum $|\alpha|=\sum_{j=1}^n \alpha_j$ and we denote $\alpha!=\alpha_1!\ldots \alpha_n!$. For $z=(z_1,\ldots,z_n)\in\mathbb{C}^n$ and a multi-index $\alpha=(\alpha_1,\ldots,\alpha_n)$, we define 
\[z^{\alpha}=\prod\limits_{j=1}^n z_j^{\alpha_j}\;\;\text{and}\;\;|z|^{\alpha}=\prod\limits_{j=1}^n |z_j|^{\alpha_j}.\]

\begin{defi}
An open polydisk \emph{(or open polycylinder)} in $\mathbb{C}^n$ is a subset $\mathbb{P}\Delta(a;r)\subset \mathbb{C}^n$ of the form 
\[\mathbb{P}\Delta(a;r)=\prod\limits_{j=1}^n \Delta(a_j;r_j)=\lbrace z\in\mathbb{C}^n: |z_i-a_i|<r_i,\;i=1,2,\ldots,n\rbrace,\]
the point $a=(a_1,\ldots,a_n)\in\mathbb{C}^n$ is called the centre of the polydisk and $r=(r_1,\ldots,r_n)\in\mathbb{R}^n\;(r_i>0)$ is called the polyradius. It is easy to see that
\begin{align*}
\mathbb{P}\Delta(0;1)=\mathbb{P}\Delta(0_n;1_n)=\prod\limits_{j=1}^n \Delta(0_j;1_j).
\end{align*}
\end{defi}

The unit disk in the complex plane is denoted by $\mathbb{D}$.\vspace{1.2mm}

\begin{defi} If $f(z)=f(z_1,\ldots,z_n)$ is continuous in $\Omega\subset \mathbb{C}^n$ and holomorphic in each variable $z_k, k=1,\ldots,n$, separately, then $f(z)$ is said to be holomorphic in $\Omega$. We also call $f(z)=f(z_1,\ldots,z_n)$ a holomorphic function of $n$ variables $z_1,\ldots,z_n$.
\end{defi}
We write
\begin{align*}
z_j = x_j + i y_j \qquad (i^2 = -1,\; j = 1,\dots,n),
\end{align*}
where \(x_j\) and \(y_j\) are real numbers. We set
\begin{align*}
f(z) = u(x,y) + i v(x,y),
\end{align*}
where $u(x,y)$ and $v(x,y)$ are the real and imaginary parts of $f(z)$; $x = (x_1,\dots,x_n)$ and $y = (y_1,\dots,y_n)$.
The Cauchy--Riemann equations for each $z_j\;(j=1,\dots,n)$ are
\begin{align}\label{Eq 1.1}
\frac{\partial u}{\partial x_j}
=
\frac{\partial v}{\partial y_j}\quad \text{and}\quad
\frac{\partial u}{\partial y_j}
=
-\,\frac{\partial v}{\partial x_j}
\qquad (j=1,\dots,n).
\end{align}

By differentiating (\ref{Eq 1.1}) with respect to $x_k$ and $y_k$, we see that both $u$ and $v$ satisfy the
following system of partial differential equations of second order:
\begin{align}\label{Eq 1.2}
\frac{\partial^2}{\partial x_j \partial x_k}
+
\frac{\partial^2}{\partial y_j \partial y_k}
= 0
\quad \text{and} \quad
\frac{\partial^2}{\partial x_j \partial y_k}
-
\frac{\partial^2}{\partial x_k \partial y_j}
= 0
\qquad (j,k = 1,\dots,n).
\end{align}

For a complex variable $z_j = x_j + i y_j$, we define
\begin{align}\label{Eq 1.3}
\frac{\partial}{\partial z_j}
= \frac{1}{2}\left( \frac{\partial}{\partial x_j}
- i \frac{\partial}{\partial y_j} \right),\quad
\frac{\partial}{\partial \bar z_j}
= \frac{1}{2}\left( \frac{\partial}{\partial x_j}
+ i \frac{\partial}{\partial y_j} \right).
\end{align}
\begin{align*}
\partial =\sum\limits_{j=1}^n \frac{\partial }{\partial z_j} d z_j,\quad \ol{\partial} =\sum\limits_{j=1}^n \frac{\partial }{\partial \ol z_j} d \ol {z}_j \quad \text{and} \quad d=\partial+\ol{\partial}.
\end{align*}

We know that a function $f$ defined on an open subset $U\subset \mathbb{R}^n$ is said to be of $C^k$-class if $f$ is $k$-times continuously differentiable.

Let $f(z) = u(x,y) + i v(x,y)\;(x = (x_1,\dots,x_n), y = (y_1,\dots,y_n))$, where both $u$ and $v$ are of $C^2$-class. A direct calculation on (\ref{Eq 1.3}) shows that 
\begin{align}\label{Eq 1.5}
4\frac{\partial^2 f(z)}{\partial \bar z_j \partial z_k}=&\frac{\partial^2 u(x,y)}{\partial x_j x_k}+\frac{\partial^2 u(x,y)}{\partial y_j y_k}+i\left(\frac{\partial^2 v(x,y)}{\partial x_j x_k}+\frac{\partial^2 v(x,y)}{\partial y_j y_k}\right)\\&-i\left(\frac{\partial^2 u(x,y)}{\partial x_j y_k}-\frac{\partial^2 u(x,y)}{\partial x_k y_j}\right)+\left(\frac{\partial^2 v(x,y)}{\partial x_j y_k}-\frac{\partial^2 v(x,y)}{\partial x_k y_j}\right).\nonumber
\end{align}

\begin{defi} A function $f:\Omega \to \mathbb{C}$ defined on an open set $\Omega\subset \mathbb{C}^n$ is said to be holomorphic if $f$ is of $C^1$-class and satisfies
\begin{align}\label{Eq 1.4}
\ol{\partial} f=0,\;\; \text{i.e.},\;\; \frac{\partial f(z)}{\partial \bar z_j}=0\quad \text{on}\;\;\Omega\;\;\text{ for all j}.
\end{align}
\end{defi}

\subsection*{{\bf Pluriharmonic mapping}}

\begin{defi}
A real-valued function $\phi(x,y)$, where $x = (x_1,\dots,x_n)$ and $y = (y_1,\dots,y_n)$ is \emph{pluriharmonic} if it satisfies the conditions  \emph{(\ref{Eq 1.2})}. Thus a continuous complex-valued function $f(z)=u(x,y)+iv(x,y)$, where
$x = (x_1,\dots,x_n)$ and $y = (y_1,\dots,y_n)$ is a complex-valued \emph{pluriharmonic} function in a domain $\Omega\subset \mathbb{C}^n$, if both $u(x,y)$ and $v(x,y)$ are real-valued \emph{pluriharmonic} functions in $\Omega$.
If $u(x,y)$ and $v(x,y)$, where $x = (x_1,\dots,x_n)$ and $y = (y_1,\dots,y_n)$ satisfy \emph{(\ref{Eq 1.1})}, then we call $v(x,y)$, a \emph{pluriharmonic conjugate} of $u(x,y)$.
\end{defi}
\par Thus for functions $f(z)=u(x,y)+iv(x,y)$, where
$x = (x_1,\dots,x_n)$ and $y = (y_1,\dots,y_n)$ with continuous second order partial derivatives, it is clear from (\ref{Eq 1.5}) and (\ref{Eq 1.4}) that $\frac{\partial f(z)}{\partial z_j}$ is holomorphic on $\Omega$ for all $j$ if  $f(z)$ is pluriharmonic function.
\vspace{1.5mm}
\par In a simply connected domain $\Omega\subset \mathbb{C}^n$, let $f(z)$ be a complex-valued pluriharmonic function. We recall that $\frac{\partial f(z)}{\partial z_j}$ is holomorphic on $\Omega$ for all $j=1,2,\ldots,n$ if $f(z)$ is pluriharmonic and let $\frac{\partial h(z)}{\partial z_j}=\frac{\partial f(z)}{\partial z_j}\;(j=1,2,\ldots,n)$, where $h(z)$ is holomorphic in $\Omega$. Now let $g(z)=\ol {f(z)}-\ol {h(z)}$ and we observe that
\[\frac{\partial g(z)}{\partial \ol{z_j}}=\ol{\frac{\partial f(z)}{\partial z_j}}-\ol{\frac{\partial h(z)}{\partial z_j}}=0\quad \text{in}\;\Omega,\quad j=1,2,\ldots,n\]
by the definition of $h$. Thus $g(z)$ is holomorphic in $\Omega$. Therefore the pluriharmonic function $f(z)$ has the representation $f(z)=h(z)+\overline{g(z)}$, where $h(z)$ and $g(z)$ are holomorphic in $\Omega$. \vspace{1.2mm}

In one complex variable, harmonic functions are closely related to analytic functions since every harmonic function can locally be represented as the real part of an analytic function. When moving to higher dimensions, ordinary harmonicity is no longer sufficient to capture the complex structure of $\mathbb{C}^n$. This leads to the concept of pluriharmonic functions, which are functions whose restrictions to every complex line are harmonic. Thus, pluriharmonicity preserves the intrinsic geometry of complex spaces and provides a suitable framework for extending many classical results of geometric function theory to higher dimensions.

\smallskip
From the perspective of geometric function theory, pluriharmonic functions are important because they provide a broader class of mappings that retain many geometric properties of holomorphic functions. In particular, mappings of the form $f(z)=h(z)+\ol{g(z)}$,
where $h$ and $g$ are holomorphic functions in several complex variables, are pluriharmonic. Such mappings have become a major area of research due to their applications in studying univalence, distortion estimates, covering theorems, Schwarz-type lemmas, and Landau-Bloch-type results in higher dimensions.

\smallskip
Pluriharmonic functions are also closely connected with pseudoconvex domains, which are fundamental objects in several complex variables. Since every pluriharmonic function is simultaneously plurisubharmonic and plurisupeharmonic, it represents the \emph{flat} case of complex potential theory. This relationship enables the use of geometric methods to investigate the structure of complex domains and the behavior of holomorphic and harmonic mappings.

\smallskip
Therefore, pluriharmonic functions occupy a central position in modern geometric function theory. They generalize harmonic functions to several complex variables, connect analytic and geometric aspects of complex mappings, and provide a powerful framework for extending classical geometric results from the complex plane to higher-dimensional complex spaces. Their study continues to be an active research area, particularly in the theory of univalent mappings, invariant metrics, complex geometry, and multidimensional harmonic analysis.
\vspace{1.2mm}

Let $f(z)$ be a holomorphic function in a domain $\Omega\subset \mathbb{C}^n$. Then in the polydisk $\mathbb{P}\Delta(0;r)\subset \Omega$, $f(z)$ has a power series expansion in $z_1,\ldots,z_n$,
\begin{align*}
f(z)=\sum\limits_{\alpha_1,\alpha_2,\ldots,\alpha_n=0}^{\infty} a_{\alpha_1,\alpha_2,\ldots,\alpha_n}z_1^{\alpha_1}z_2^{\alpha_2}\ldots z_n^{\alpha_n}=
\sum\limits_{|\alpha|=0}^{\infty} a_{\alpha}z^{\alpha}=\sum\limits_{|\alpha|=0}^{\infty} P_{|\alpha|}(z),
\end{align*}
which is absolutely convergent in $\mathbb{P}\Delta(0;r)$, where the term $P_k(z)$ is a homogeneous polynomial of degree $k$. \vspace{1.2mm}

Let $G\not=\varnothing$ be an open subset of $\mathbb{C}^n$. Let $f$ be a holomorphic function on $G$. For a point $a\in\mathbb{C}^n$, we write 
\begin{align*}
	f(z)=\sum_{i=0}^{\infty}P_i(z-a),
\end{align*}
where the term $P_i(z-a)$ is either identically zero or a homogeneous polynomial of degree $i$. Denote the zero-multiplicity of $f$ at $a$ by 
\begin{align*}
	k=\min\{i:P_i(z-a)\not\equiv 0\}.
\end{align*}
Clearly $1$ is the zero-multiplicity of $f$ at $a$ when $f(a)=0$ and $\frac{\partial f(a)}{\partial z_j}\neq 0$ for some $j=1,2,\ldots,n$.

\subsection{\bf{Different class of pluriharmonic mappings}}
Let $\mathcal{H}_n$ denote the class of complex-valued pluriharmonic functions $f$ in $\mathbb{P} \Delta(0;1)$, normalized by 
\begin{align*}
	f(0)= 0\;\; \mbox{and}\;\;\nabla f(0):=\left(\frac{\partial f(0)}{\partial z_1},\ldots,\frac{\partial f(0)}{\partial z_n}\right)=(1,1,\ldots,1).
\end{align*}

Each function $f$ in $\mathcal{H}_n$ can be expressed as $f=h+\ol g$, where $h$ and $g$ are holomorphic functions in $\mathbb{P} \Delta(0;1)$. Here $h$ and $g$ are called the holomorphic and co-holomorphic parts of $f$ respectively, and have power series representations
\begin{align*}
	h(z)=\sum\limits_{j=1}^n z_j+\sum\limits_{k=2}^{\infty}\sum\limits_{|\alpha|=k} a_{\alpha} z^{\alpha}\quad {and}\quad g(z)=\sum\limits_{k=1}^{\infty}\sum\limits_{|\alpha|=k} b_{\alpha} z^{\alpha}.
\end{align*}

Obviously when $\dim(\mathbb{C}^n)=1$, $\mathcal{H}(=\mathcal{H}_1)$ is the the class of complex-valued harmonic functions $f$ in the unit disk $\mathbb{D}$, normalized by $f(0)=0$, $\frac{\partial f(0)}{\partial z}=1$.

If $g(z)\equiv 0$, then $\mathcal{H}_n$ reduces to the class $\mathcal{A}_n$ of holomorphic functions in the unit polydisk $\mathbb{P} \Delta(0;1)$ with
\begin{align*}
	f(0)= 0 \quad \text{and} \quad \nabla f(0).= (1, 1, \ldots, 1).
\end{align*}

 Clearly when $\dim(\mathbb{C}^n)=1$, $\mathcal{A}(=\mathcal{A}_1)$ is the the class of  holomorphic functions $f$ in the unit disk $\mathbb{D}$, normalized by $f(0)=0$, $\frac{\partial f(0)}{\partial z}=1$.
Let
\begin{align*}
	\mathcal{H}^0_n=\left\lbrace f\in\mathcal{H}_n: \ol{\partial} f(0)=0\right\rbrace.
\end{align*}

Hence for any function $f=h+\ol g$ in $\mathcal{H}^0_n$, its holomorphic and co-holomorphic parts can be represented by
\begin{align}\label{Eq 1.7}
	h(z)=\sum\limits_{j=1}^n z_j+\sum\limits_{k=2}^{\infty}\sum\limits_{|\alpha|=k} a_{\alpha} z^{\alpha}\quad {and} \quad g(z)=\sum\limits_{k=2}^{\infty}\sum\limits_{|\alpha|=k} b_{\alpha} z^{\alpha}
\end{align}
respectively. \vspace{1.2mm}

A complex-valued pluriharmonic mapping $f\in \mathcal{H}_n$ is said to be starlike if $f(\mathbb{P} \Delta(0;1))$ is a starlike
domain with respect to the origin. We denote the class of pluriharmonic starlike functions in $\mathbb{P} \Delta(0;1)$ by $\mathcal{S}^*_{\mathcal{H}_n}$. A function $f$ in $\mathcal{H}_n$ is said to be convex if $f(\mathbb{P} \Delta(0;1))$ is convex. We denote the class of pluriharmonic convex mappings in $\mathbb{P} \Delta(0;1)$ by $\mathcal{K}_{\mathcal{H}_n}$. A function $f\in\mathcal{H}_n$ is said to be close-to-convex if $f(\mathbb{P} \Delta(0;1))$ is a close-to-convex domain. We denote the class of pluriharmonic close-to-convex mappings in $\mathbb{P} \Delta(0;1)$ by $\mathcal{C}_{\mathcal{H}_n}$. Finally, denote by ${\mathcal{S}^{*0}_{\mathcal{H}_n}}$, $\mathcal{K}_{\mathcal{H}_n}^0$ and $\mathcal{C}_{\mathcal{H}_n}^0$, the
subclasses of $\mathcal{S}^*_{\mathcal{H}_n}$, $\mathcal{K}_{\mathcal{H}_n}$ and $\mathcal{C}_{\mathcal{H}_n}$ with $\ol{\partial}f(0)=(0,0,\ldots,0)$ respectively.\vspace{1.2mm}

Let $\mathcal{S}_{\mathcal{H}}$ be the subclass of $\mathcal{H}$ consisting of univalent and sense preserving harmonic
mappings.  In 1984, Clunie and Sheil-Small \cite{Clunie-Sheil-Small-1984} investigated the class $\mathcal{S}_{\mathcal{H}}$, together with some geometric subclasses. Subsequently, the class $\mathcal{S}_{\mathcal{H}}$ and its subclasses have been extensively studied by several authors (see \cite{Aizenberg-Aytuna-Djakov-JMAA-2001}-\cite{Ghosh-Allu-2019}, \cite{Hernandez-Martin-2013}-\cite{Liu-Yang-2019},  \cite{Muhanna-CVEE-2010}, \cite{Ponnusamy-Allu-Vuorinen-2009}-\cite{Ponnusamy-Kaliraj-Starkov-2017}).
\vspace{1.2mm}

In 2013, Li and Ponnusamy \cite{Li-Ponnusamy-2013} investigated the properties of functions in $\mathcal{P}_{\mathcal{H}}^{0}$  given by
\begin{align*}
	\mathcal{P}_{\mathcal{H}}^{0}
	:= \left\{
	f=h+\overline{g} \in \mathcal{H} :
	\Re\bigl(h'(z)\bigr) > |g'(z)|,\; z\in\mathbb{D}
	\right\}.
\end{align*}

The class $\mathcal{P}_{\mathcal{H}}^{0}$ is closely related to the class
\begin{align*}
	\mathcal{R}
	:= \left\{
	f\in\mathcal{S} :
	\Re\bigl(f'(z)\bigr)>0,\; z\in\mathbb{D}
	\right\},
\end{align*}
introduced by MacGregor \cite{MacGregor-1962}. It is known that a harmonic function $f=h+\ol g$ belongs to
$\mathcal{P}_{\mathcal{H}}^{0}$ if and only if the analytic functions
$h+\lambda g$ belong to $\mathcal{R}$ for each $\lambda$
($|\lambda|=1$) (see \cite{Li-Ponnusamy-2013,Li-Ponnusamy-2013a}).
Using this property, Li and Ponnusamy \cite{Li-Ponnusamy-2013} obtained coefficient bounds and the radius of convexity for functions in $\mathcal{P}_{\mathcal{H}}^{0}$.\vspace{1.2mm}

In 1977, Chichra \cite{Chichra-1977} studied the following class
\begin{align*}
	\mathcal{W}(\alpha)
	=
	\left\{
	f \in \mathcal{A} :
	\Re\!\left(f'(z)+\alpha z f''(z)\right)>0,
	\quad z\in\mathbb{D},\ \alpha\geq 0
	\right\}.
\end{align*}

In 2010, the regions of variability for functions in $\mathcal{W}(\alpha)$ were studied by Ponnusamy and Vasudevarao \cite{Ponnusamy-Vasudevarao-2010}, and Singh and Singh \cite{Singh-Singh-1982} established that $\mathcal{W}(1)$ is a subclass of $\mathcal{S}^{*}$.\vspace{1.2mm}

In 2014, Nagpal and Ravichandran \cite{Nagpal-Ravichandran-2014} studied the following class
\begin{align*}
	\mathcal{W}_{\mathcal{H}}^{0}
	:=
	\left\{
	f=h+\overline{g} \in \mathcal{H} :
	\Re\!\left(h'(z)+z h''(z)\right)
	>
	\left|g'(z)+z g''(z)\right|,
	\quad z\in\mathbb{D}
	\right\},
\end{align*}
which is the harmonic analogue of $\mathcal{W}(1)$.

It is known that $\mathcal{W}_{\mathcal{H}}^{0}$ is a subclass of both
$\mathcal{S}_{\mathcal{H}}^{*0}$ and $\mathcal{P}_{\mathcal{H}}^{0}$.
In particular, members of $\mathcal{W}_{\mathcal{H}}^{0}$ are fully starlike in $\mathbb{D}$.
Sharp coefficient bounds and growth theorems for functions in
$\mathcal{W}_{\mathcal{H}}^{0}$ have been obtained in \cite{Nagpal-Ravichandran-2014}.

\section{\bf {Preliminary results}}
In $2020$, Ghosh and Vasudevarao \cite{Ghosh-Allu-2020} have introduced the subclass $\mathcal{P}_{\mathcal{H}}^0(M)$ of $\mathcal{H}$ as follows.\vspace{1.2mm}

For $M>0$, let the class $\mathcal{P}^0_{\mathcal{H}}(M)$ be defined by
\begin{align*}
\mathcal{P}^0_{\mathcal{H}}(M)=\left\lbrace f=h+\ol g\in\mathcal{H}:\Re(zh''(z))>-M+|zg''(z)|\;\;\text{and}\;\;g'(0)=0 \right\rbrace.
\end{align*}

In \cite{Ghosh-Allu-2020}, Ghosh and Vasudevarao \cite{Ghosh-Allu-2020} proved that the class $\mathcal{P}_{\mathcal{H}}^0(M)$ is closely related to the class
\begin{align*}
	\mathcal{P}(M)=\left\lbrace \phi\in\mathcal{A}: \Re\left(z \phi''(z)\right)> -M\;\;\text{for}\;\;z\in \mathbb{D}\right\rbrace.
\end{align*}
	
The class $\mathcal{P}(M)$ has been studied extensively by Mocanu Mocanu \cite{Mocanu-1992} and Ponnusamy and Singh \cite{Ponnusamy-Singh-2002}.\vspace{1.2mm}

In the same paper, Ghosh and Vasudevarao \cite{Ghosh-Allu-2020} first established a result providing a one-to-one correspondence between the classes $\mathcal{P}_{\mathcal{H}}^0(M)$ and $\mathcal{P}(M)$.

\begin{theoA}\emph{\cite[Theorem 2.1]{Ghosh-Allu-2020}} A harmonic mapping $f=h+\ol g$ belongs to $\mathcal{P}_{\mathcal{H}}^0(M)$ if and only if the function $F_{\varepsilon}=h+\varepsilon g$ belongs to $\mathcal{P}(M)$ for each $\varepsilon$ ($|\varepsilon|=1$).
\end{theoA}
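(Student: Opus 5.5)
The proof rests on the elementary fact that for a complex number $a$ and a real $b\ge 0$,
\[
\Re(a)>-M+b \iff \Re(a+\varepsilon w)>-M \ \text{for every } |\varepsilon|=1,
\]
whenever $|w|=b$. The point is that $\min_{|\varepsilon|=1}\Re(\varepsilon w)=-|w|$, and this minimum is attained by $\varepsilon=-\ol{w}/|w|$ when $w\neq 0$. We apply this pointwise with $a=zh''(z)$ and $w=zg''(z)$, and we also check the normalizations.

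\emph{Necessity.} Let $f=h+\ol g\in\mathcal{P}^0_{\mathcal{H}}(M)$ and fix $|\varepsilon|=1$. Since $g'(0)=0$, the function $F_\varepsilon=h+\varepsilon g$ satisfies $F_\varepsilon(0)=0$ and $F_\varepsilon'(0)=h'(0)=1$, so $F_\varepsilon\in\mathcal{A}$. For $z\in\mathbb{D}$,
\[
\Re\bigl(zF_\varepsilon''(z)\bigr)=\Re\bigl(zh''(z)\bigr)+\Re\bigl(\varepsilon zg''(z)\bigr)\ge \Re\bigl(zh''(z)\bigr)-|zg''(z)|>-M.
\]
Hence $F_\varepsilon\in\mathcal{P}(M)$.

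\emph{Sufficiency.} Suppose $F_\varepsilon\in\mathcal{P}(M)$ for every unimodular $\varepsilon$. The normalization $F_\varepsilon'(0)=1$ gives $h'(0)+\varepsilon g'(0)=1$ for all $|\varepsilon|=1$. Comparing two distinct values of $\varepsilon$ (say $\varepsilon=1$ and $\varepsilon=-1$) forces $g'(0)=0$ and $h'(0)=1$. Likewise $F_\varepsilon(0)=0$ for all $\varepsilon$ gives $h(0)=g(0)=0$, so $f\in\mathcal{H}$ with $g'(0)=0$.

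Now fix $z\in\mathbb{D}$. If $zg''(z)=0$, the required inequality is just $\Re(zh''(z))>-M$, which follows from any single $\varepsilon$. Otherwise choose the point-dependent value $\varepsilon=-\ol{zg''(z)}/|zg''(z)|$. Then $\Re(\varepsilon zg''(z))=-|zg''(z)|$, and the hypothesis $\Re(zF_\varepsilon''(z))>-M$ becomes exactly
\[
\Re\bigl(zh''(z)\bigr)>-M+|zg''(z)|.
\]

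The only delicate point is that $\varepsilon$ depends on $z$. This is harmless, because the hypothesis holds for all $\varepsilon$ simultaneously and at every point, so we may choose the rotation point by point. There is no real obstacle beyond this and the bookkeeping of normalizations. In particular, the condition $g'(0)=0$ must be extracted from the family of normalizations rather than assumed.
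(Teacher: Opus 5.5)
Your proof is correct and is essentially the argument the paper gives for its $n$-variable analogue (Theorem~\ref{Th-1.3}), which reduces to Theorem A when $n=1$. Necessity uses $\Re(\varepsilon zg''(z))\ge -|zg''(z)|$, and sufficiency uses the pointwise choice of a unimodular $\varepsilon$ with $\Re(\varepsilon zg''(z))=-|zg''(z)|$; your version is slightly more careful than the paper's, since you write $\ge$ rather than a strict inequality in the necessity step and you recover $g'(0)=0$ (and $h(0)=g(0)=0$, $h'(0)=1$) from the whole family $F_\varepsilon$ instead of assuming it.
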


The following results due to Ghosh and Vasudevarao \cite{Ghosh-Allu-2020} provides sharp coefficient bounds for functions in $\mathcal{P}_{\mathcal{H}}^0(M)$.

\begin{theoB}\emph{\cite[Theorem 2.2]{Ghosh-Allu-2020}} Let $f=h+\ol g\in \mathcal{P}_{\mathcal{H}}^0(M)$ for $M>0$ be of the form
\begin{align}\label{THD}
h(z)=z+\sum\limits_{k=2}^{\infty} a_{k}z^k\quad \text{and}\quad g(z)=\sum\limits_{k=2}^{\infty}b_k z^k.
\end{align}
Then for $k\geq 2$,
\begin{align*}
|b_k|\leq \dfrac{2M}{k(k-1)}.
\end{align*}
The result is sharp for the function $f$ given by $f(z)=z-M\ol{z^k}/k(k-1)$.
\end{theoB}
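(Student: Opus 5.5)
The plan is to prove Theorem B from the definition of $\mathcal{P}_{\mathcal{H}}^0(M)$, using a subordination argument in the style of Carathéodory's lemma.

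From $\Re(zh''(z))>-M+|zg''(z)|$ we get, for every $z\in\mathbb{D}$ and every unimodular $\varepsilon$,
\begin{align*}
\Re\bigl(zh''(z)+\varepsilon zg''(z)\bigr)\geq \Re(zh''(z))-|zg''(z)|>-M .
\end{align*}
This is exactly the statement that $F_\varepsilon=h+\varepsilon g\in\mathcal{P}(M)$, i.e. the easy direction of Theorem A.

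Now isolate the co-analytic part by choosing $\varepsilon$ cleverly. Set
\begin{align*}
p(z)=1+\frac{zh''(z)}{M},\qquad q(z)=\frac{zg''(z)}{M}.
\end{align*}
Both functions are holomorphic in $\mathbb{D}$. The hypothesis reads $\Re p(z)>|q(z)|$, and $p(0)=1$, $q(0)=0$. Expanding,
\begin{align*}
q(z)=\frac{1}{M}\sum_{k\ge2}k(k-1)b_kz^{k-1}.
\end{align*}
So it suffices to show that the coefficient $c_m$ of $z^m$ in $q$ satisfies $|c_m|\le 2$ for $m\ge1$; this gives $|b_k|\le 2M/(k(k-1))$.

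To bound $c_m$, consider $\omega(z)=q(z)/p(z)$. Since $|q|<\Re p\le |p|$, we have $|\omega|<1$ and $\omega(0)=0$, so $\omega$ is a Schwarz function. One could then work from $q=p\,\omega$, but coupling the coefficients of $p$ and $\omega$ is messy. A cleaner route is to note that for every $|\varepsilon|=1$ the function
\begin{align*}
P_\varepsilon(z)=p(z)+\varepsilon q(z)
\end{align*}
has positive real part and satisfies $P_\varepsilon(0)=1$. By Carathéodory's lemma, each of its coefficients has modulus at most $2$, so
\begin{align*}
|p_m+\varepsilon c_m|\le 2\quad\text{for all } |\varepsilon|=1 .
\end{align*}
Taking the maximum over $\varepsilon$ gives $|p_m|+|c_m|\le 2$, hence $|c_m|\le 2$. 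This yields $|b_k|\le 2M/(k(k-1))$ for all $k\ge2$; it is even the stronger bound $|a_k|+|b_k|\le 2M/(k(k-1))$.

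The only delicate point is making sure the $\varepsilon$-maximization is legitimate. The inequality holds for each fixed $\varepsilon$, so we simply pick $\varepsilon$ aligning the arguments of $p_m$ and $c_m$; this is fine.

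For sharpness, take $h(z)=z$ and $g(z)=-Mz^k/(k(k-1))$. Then $zh''\equiv0$ and $|zg''(z)|=M|z|^{k-1}<M$, so the defining inequality holds and $f\in\mathcal{P}_{\mathcal{H}}^0(M)$. Also $g'(0)=0$, and this $f$ has $|b_k|=M/(k(k-1))$.

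Equality in the theorem's bound $2M/(k(k-1))$ would require $|b_k|=2M/(k(k-1))$. As I recall, that would need $g=-2Mz^k/(k(k-1))$, giving $|zg''|=2M|z|^{k-1}$, which violates the condition near the boundary. So I would double-check the stated extremal: my proof gives the bound $2M/(k(k-1))$, while the stated extremal attains only $M/(k(k-1))$. Reconciling these is where I expect the main obstacle — verifying the sharpness constant — and I would recheck the computation $zg''=k(k-1)b_kz^{k-1}$ first.
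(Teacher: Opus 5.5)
Your Carath\'eodory argument does prove the inequality as printed. $P_\varepsilon=1+(zh''+\varepsilon zg'')/M$ has positive real part and $P_\varepsilon(0)=1$, and aligning $\varepsilon$ gives $|a_k|+|b_k|\le 2M/(k(k-1))$.

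That is really the route of Theorem C(i). It is also the route of the paper's proof of Theorem~\ref{Th-1.6}, which likewise works with $F_\varepsilon$ and a Carath\'eodory-type bound $2$. Because it bounds $b_k$ only jointly with $a_k$, it necessarily loses a factor $2$ for $b_k$ alone. This is why you could not close the sharpness part, and your suspicion is right: as printed, the sharpness claim is false. The factor $2$ in the bound is evidently a misprint. The paper does not prove Theorem B itself; it only quotes it. However, its $n$-variable analogue, Theorem~\ref{Th-1.5}, is proved by a different device, and in the case $n=1$ it reads $|b_k|\le M/(k(k-1))$.

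The missing idea is to estimate $b_k$ by itself, by integrating the defining inequality over circles. For $0<r<1$ and $z=re^{i\theta}$,
\begin{align*}
k(k-1)|b_k|r^{k-1}&=\left|\frac{1}{2\pi}\int_0^{2\pi} zg''(z)\,e^{-i(k-1)\theta}\,d\theta\right|\le\frac{1}{2\pi}\int_0^{2\pi}|zg''(z)|\,d\theta\\
&<\frac{1}{2\pi}\int_0^{2\pi}\bigl(M+\Re(zh''(z))\bigr)\,d\theta=M .
\end{align*}
The last equality holds because $zh''$ is holomorphic and vanishes at $0$, so $\Re(zh'')$ has mean zero on every circle. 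Letting $r\to1^-$ gives $|b_k|\le M/(k(k-1))$, and your own example $h(z)=z$, $g(z)=-Mz^k/(k(k-1))$ attains it.

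So the correct sharp statement is $|b_k|\le M/(k(k-1))$, with the stated extremal. Moreover, $|b_k|=2M/(k(k-1))$ is attained by no member of the class at all, not merely by the particular $g$ you tried. In your notation, integrating $\Re p>|q|$ over $|z|=r$ gives $|c_m|\le 1$. Carath\'eodory applied to $p+\varepsilon q$ only yields $|p_m|+|c_m|\le 2$, which is sharp for the pair but not for $c_m$ alone.
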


\begin{theoC}\emph{\cite[Theorem 2.3]{Ghosh-Allu-2020}} Let $f=h+\ol g\in \mathcal{P}_{\mathcal{H}}^0(M)$ for $M>0$ be of the form \emph{ (\ref{THD})}. 
Then for $k\geq 2$,
\begin{enumerate}
\item[\emph{(i)}] $|a_k|+|b_k|\leq \dfrac{2M}{k(k-1)}$,\vspace{2mm}
\item[\emph{(ii)}] $\left||a_k|-|b_k|\right|\leq \dfrac{2M}{k(k-1)}$,\vspace{2mm}
\item[\emph{(iii)}] $|a_k|\leq \dfrac{2M}{k(k-1)}$.
\end{enumerate}
The result is sharp for the function $f$ given by $f'(z)=1-2M\log (1-z)$.
\end{theoC}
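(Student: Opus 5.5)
The plan is to reduce Theorem C to coefficient estimates for the analytic functions $F_{\varepsilon}=h+\varepsilon g$ ($|\varepsilon|=1$), using Theorem A. That theorem gives $F_\varepsilon\in\mathcal{P}(M)$ for every unimodular $\varepsilon$. Writing
\[F_\varepsilon(z)=z+\sum_{k\geq 2}(a_k+\varepsilon b_k)z^k,\]
the condition $\Re(zF_\varepsilon''(z))>-M$ means that
\[p(z)=1+\frac{zF_\varepsilon''(z)}{M}=1+\sum_{k\geq 2}\frac{k(k-1)(a_k+\varepsilon b_k)}{M}z^{k-1}\]
has positive real part in $\mathbb{D}$ and satisfies $p(0)=1$. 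Carathéodory's lemma $|p_j|\leq 2$ then yields, for every $k\geq 2$ and every $|\varepsilon|=1$,
\[|a_k+\varepsilon b_k|\leq \frac{2M}{k(k-1)}.\]
This is the same estimate that underlies Theorem B.

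For (i), I choose $\varepsilon$ so that $\varepsilon b_k$ has the same argument as $a_k$; if one of the two coefficients is $0$, any $\varepsilon$ works. Then $|a_k+\varepsilon b_k|=|a_k|+|b_k|$, which gives (i) immediately. Part (ii) follows from (i), since $\bigl||a_k|-|b_k|\bigr|\leq |a_k|+|b_k|$. Part (iii) is a weaker form of (i). One could also obtain (iii) directly by averaging over $\varepsilon$ and applying the triangle inequality, but (i) already suffices.

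For sharpness, I take $g\equiv 0$ and $h'(z)=1-2M\log(1-z)$ with $h(0)=0$. Then $zh''(z)=2Mz/(1-z)$, and its real part exceeds $-M$ because $\Re\bigl(z/(1-z)\bigr)>-1/2$ on $\mathbb{D}$. So $f=h\in\mathcal{P}^0_{\mathcal{H}}(M)$, with $g'(0)=0$ holding trivially. Expanding gives $h'(z)=1+2M\sum_{m\geq 1}z^m/m$, hence
\[a_k=\frac{2M}{k(k-1)}\quad\text{and}\quad b_k=0,\]
so equality holds in (i), (ii) and (iii).

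The only delicate points are these. The first is checking that the Carathéodory normalization is right, namely that $p(0)=1$ and $\Re p>0$; this depends on $M>0$. The second is choosing $\varepsilon$ correctly to align the arguments in (i). Neither step is a real obstacle.
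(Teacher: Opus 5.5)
Your argument is correct and matches the paper's approach. The paper only quotes Theorem C from Ghosh--Vasudevarao, but its proof of the $n$-variable analogue (Theorem \ref{Th-1.6}) specializes exactly to yours for $n=1$: pass to $F_{\varepsilon}=h+\varepsilon g$ via the correspondence theorem, form $p=1+zF_{\varepsilon}''/M$ with $\Re p>0$ and $p(0)=1$, bound its coefficients by $2$ (the paper rederives this Carath\'eodory bound by rotation averaging and a Schwarz-type lemma), obtain $|a_k+\varepsilon b_k|\leq 2M/(k(k-1))$, and choose $\varepsilon$ to align arguments. The paper gets (ii) directly from $\bigl||a_k|-|b_k|\bigr|\leq |a_k+\varepsilon b_k|$ rather than from (i), and your check of the extremal function $h'(z)=1-2M\log(1-z)$, $g\equiv 0$, is sound.
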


In the same paper, Ghosh and Vasudevarao \cite{Ghosh-Allu-2020} provides the growth estimate for functions in $\mathcal{P}_{\mathcal{H}}^0(M)$.

\begin{theoD}\emph{\cite[Theorem 2.4]{Ghosh-Allu-2020}} Let $f=h+\ol g\in \mathcal{P}_{\mathcal{H}}^0(M)$ for $M>0$ be of the form \emph{(\ref{THD})}. Then
\begin{align*}
|z|-2M\sum\limits_{k=2}^{\infty}\frac{|z|^k}{k(k-1)}\leq |f(z)|\leq |z|+2M\sum\limits_{k=2}^{\infty}\frac{|z|^k}{k(k-1)}.
\end{align*}

The right-hand inequality is sharp for the function $f$ given by $f'(z)=1-2M\log (1-z)$.
\end{theoD}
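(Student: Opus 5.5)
The statement to prove is Theorem D, the growth estimate for $\mathcal{P}_{\mathcal{H}}^0(M)$. I would get it directly from the coefficient bounds of Theorem C(i) by a term-by-term triangle inequality.

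\emph{Upper bound.} Write
\begin{align*}
f(z)=h(z)+\ol{g(z)}=z+\sum_{k=2}^{\infty}\left(a_k z^k+\ol{b_k}\,\ol{z}^k\right).
\end{align*}
Since $|z^k|=|\ol{z}^k|=|z|^k$, each term satisfies $\left|a_kz^k+\ol{b_k}\,\ol{z}^k\right|\le(|a_k|+|b_k|)|z|^k$. Theorem C(i) gives $|a_k|+|b_k|\le \frac{2M}{k(k-1)}$. Summing these bounds yields $|f(z)|\le |z|+2M\sum_{k\ge2}\frac{|z|^k}{k(k-1)}$.

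\emph{Lower bound.} The reverse triangle inequality gives $|f(z)|\ge |z|-\sum_{k\ge2}(|a_k|+|b_k|)|z|^k$. Applying the same coefficient bound turns this into the stated left-hand inequality. Both series converge for $|z|<1$; indeed they converge even at $|z|=1$, since $\sum \frac{1}{k(k-1)}=1$. So no convergence issue arises.

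\emph{Sharpness of the upper bound.} Take $h'(z)=1-2M\log(1-z)$ and $g\equiv0$. First check that $h\in\mathcal{P}(M)$. We have $zh''(z)=\frac{2Mz}{1-z}$, and $\Re\frac{z}{1-z}>-\tfrac12$ on $\mathbb{D}$, so $\Re(zh''(z))>-M$. Hence $f\in\mathcal{P}_{\mathcal{H}}^0(M)$ by the definition, or equivalently by Theorem A. Next, integrating gives $h(z)=z+2M\sum_{k\ge2}\frac{z^k}{k(k-1)}$, because $-\log(1-z)=\sum_{m\ge1} z^m/m$ and $\int_0^z \zeta^{k-1}\,d\zeta/(k-1)=z^k/(k(k-1))$. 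All coefficients are positive, so at $z=r\in(0,1)$ we get $f(r)=r+2M\sum \frac{r^k}{k(k-1)}$, which is equality in the upper bound.

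\emph{Main obstacle.} The only nontrivial input is Theorem C(i), which we are allowed to assume. If it had to be reproved, I would use Theorem A: for each $|\varepsilon|=1$, $F_\varepsilon=h+\varepsilon g$ lies in $\mathcal{P}(M)$. Then $1+\frac{zF_\varepsilon''}{M}$ has positive real part with value $1$ at $0$. Carathéodory's bound gives $k(k-1)|a_k+\varepsilon b_k|\le 2M$, and choosing $\varepsilon$ to align the arguments of $a_k$ and $\varepsilon b_k$ gives $|a_k|+|b_k|\le \frac{2M}{k(k-1)}$.
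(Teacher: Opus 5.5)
Your proof is correct, but it takes a different route from the paper. The paper does not reprove Theorem D, which it quotes from Ghosh and Vasudevarao. Its proof of the $n$-variable analogue, Theorem \ref{Th-8}, specializes to Theorem D at $n=1$, and it works with derivatives rather than coefficients. Theorem A and the Carath\'eodory bound for $1+zF_\varepsilon''/M$ give $|F_\varepsilon'(\zeta)-1|\le -2M\log(1-|\zeta|)$. The freedom in $\varepsilon$ turns this into the distortion estimates $|h'(\zeta)|+|g'(\zeta)|\le 1-2M\log(1-|\zeta|)$ and $|h'(\zeta)|-|g'(\zeta)|\ge 1+2M\log(1-|\zeta|)$. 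These are then integrated along the radial segment, and since $\int_0^r-\log(1-\rho)\,d\rho=\sum_{k\ge2}r^k/(k(k-1))$, the same bounds come out. You instead feed the coefficient inequality of Theorem C(i) into the triangle inequality on the series of $f$; Theorem C(i) rests on the same Theorem A plus Carath\'eodory input.

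The paper's route gives the derivative distortion bounds as a by-product, which yours does not. Yours is shorter and handles the lower bound more cleanly. The paper's step $\bigl|\int_\Gamma d\tilde f\bigr|\ge\int_\Gamma(|\tilde h'|-|\tilde g'|)\,|ds|$ is not valid on a fixed radial segment. The classical version of this argument integrates over the preimage of $[0,f(z)]$, which requires univalence. For example, fix $0<r<1$: then $h(z)=z+iz^2/r$ with $g\equiv0$ lies in $\mathcal{P}_{\mathcal{H}}^0(2/r)$. Yet $|h(r)|=\sqrt2\,r<\int_0^r|h'(s)|\,ds\approx1.48\,r$.

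The paper's lower bound can be recovered by estimating $|f(z)-z|$ instead, via $|h'-1|+|g'|\le-2M\log(1-|\zeta|)$. That is exactly the structure of your reverse-triangle-inequality argument. Your sharpness check for $h'(z)=1-2M\log(1-z)$, $g\equiv0$, is also correct.
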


\section{{\bf Main Results}}\label{Sec-2}
We now introduce the class $\mathcal{P}_{\mathcal{H}_n^0}(M)$ in $\mathbb{P} \Delta(0_n;1_n)$ as follows.

\begin{defi} For $M>0$ and $z\in \mathbb{P} \Delta(0;1)$, let
	\begin{align*}
		&\mathcal{P}_{\mathcal{H}_n^0}(M)\\=&\left\lbrace f=h+\ol g\in \mathcal{H}_n^0: \sum\limits_{l=1}^n\sum\limits_{j=1}^n\sum\limits_{k=1}^n\Re\left(z_l\frac{\partial^2 h(z)}{\partial z_j\partial z_k}\right)>-M+\sum\limits_{l=1}^n\sum\limits_{j=1}^n\sum\limits_{k=1}^n\left|z_l\frac{\partial^2 g(z)}{\partial z_j\partial z_k}\right| \right\rbrace.
	\end{align*}
\end{defi}

When $\dim(\mathbb{C}^n)=1$, we denote $\mathcal{P}_{\mathcal{H}_1^0}(M)$ by $\mathcal{P}_{\mathcal{H}}^0(M)$.\vspace{1.2mm}

We will show that the class $\mathcal{P}_{\mathcal{H}_n^0}(M)$ is closely related to the class
\begin{align*}
	\mathcal{P}_n(M)=\left\lbrace \phi\in\mathcal{A}_n: \sum\limits_{l=1}^n\sum\limits_{j=1}^n\sum\limits_{k=1}^n\Re\left(z_l\frac{\partial^2 \phi(z)}{\partial z_j\partial z_k}\right)> -M\;\;\text{for}\;\;z\in \mathbb{P} \Delta(0_n;1_n)\right\rbrace.
\end{align*}

\smallskip
The novelty of this research lies in the extension of the theory of \emph{harmonic} functions in $\mathbb{C}$ to \emph{pluriharmonic} functions in higher-dimensional complex spaces ($\mathbb{C}^n$). In this paper, we study the analytic and geometric properties for the class $\mathcal{P}_{\mathcal{H}_n^0}(M)$. Here we also establish that the subclass $\mathcal{P}_{\mathcal{H}_n^0}(M)$ is not only the generalizations of holomorphic functions but also they are closely related to the holomorphic subclass $\mathcal{P}_n(M)$.
The sharp coefficient bounds and growth estimates provided for the class $\mathcal{P}_{\mathcal{H}_n^0}(M)$ represent a major advancement in the study of geometric function theory for several complex variables.

The following result provides a one-to-one correspondence between the classes $\mathcal{P}_{\mathcal{H}_n^0}(M)$ and $\mathcal{P}_n(M)$.

\begin{theo}\label{Th-1.3} A pluriharmonic mapping $f=h+\ol g$ is in $\mathcal{P}_{\mathcal{H}_n^0}(M)$ if and only if the function  $F_{\varepsilon}=h+\varepsilon g$ is in $\mathcal{P}_n(M)$ for each $\varepsilon\; (|\varepsilon|=1)$.
\end{theo}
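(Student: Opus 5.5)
The plan is to reduce everything to the pointwise quantities
\[
A(z)=\sum_{l=1}^n\sum_{j=1}^n\sum_{k=1}^n z_l\frac{\partial^2 h(z)}{\partial z_j\partial z_k},\qquad B(z)=\sum_{l=1}^n\sum_{j=1}^n\sum_{k=1}^n z_l\frac{\partial^2 g(z)}{\partial z_j\partial z_k}.
\]
The corresponding quantity for $F_\varepsilon=h+\varepsilon g$ is $A(z)+\varepsilon B(z)$, because differentiation is linear. The normalization is checked first. If $f\in\mathcal{H}_n^0$, then $g$ has no linear terms by (\ref{Eq 1.7}), so $F_\varepsilon(0)=0$ and $\nabla F_\varepsilon(0)=(1,\ldots,1)$, hence $F_\varepsilon\in\mathcal{A}_n$. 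Conversely, $h=\tfrac12(F_1+F_{-1})\in\mathcal{A}_n$. Moreover $\varepsilon g=F_\varepsilon-h$ has vanishing gradient at $0$ for every $\varepsilon$. Taking $\varepsilon=1$ gives $\nabla g(0)=0$, which is the condition $\ol{\partial}f(0)=0$. So the two normalizations correspond, and only the differential inequalities need comparing.

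\emph{Necessity.} Let $f\in\mathcal{P}_{\mathcal{H}_n^0}(M)$ and $|\varepsilon|=1$. By the triangle inequality,
\[
\Re\bigl(A+\varepsilon B\bigr)\ \ge\ \Re A-|B|\ \ge\ \Re A-\sum_{l,j,k}\Bigl|z_l\frac{\partial^2 g}{\partial z_j\partial z_k}\Bigr|\ >\ -M .
\]
Hence $F_\varepsilon\in\mathcal{P}_n(M)$. This direction is routine.

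\emph{Sufficiency.} Suppose $\Re(A+\varepsilon B)>-M$ for all unimodular $\varepsilon$ and all $z$. Fix $z$ and choose $\varepsilon=-\ol{B(z)}/|B(z)|$ when $B(z)\ne0$. This gives $\Re A(z)>-M+|B(z)|$, in exact analogy with the proof of Theorem A. This is where the main obstacle lies. The definition of $\mathcal{P}_{\mathcal{H}_n^0}(M)$ asks for the larger quantity $\sum_{l,j,k}|z_l\,\partial^2 g/\partial z_j\partial z_k|$, not $|B|$. A single $\varepsilon$ can only align the phase of the total sum $B$, not of each summand separately.

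I would first try to close this gap by arguing that the summands share a common phase. That approach fails in general. Take $n=2$, $h(z)=z_1+z_2$ and $g(z)=c(z_1^2-z_2^2)$. Then $\sum_{j,k}\partial^2 g/\partial z_j\partial z_k=2c-2c=0$, so $B\equiv0$ and $F_\varepsilon=h\in\mathcal{P}_2(M)$ for every $\varepsilon$. On the other hand, $\sum_{l,j,k}|z_l g_{jk}|=4|c|(|z_1|+|z_2|)$, which exceeds $M$ near the distinguished boundary once $|c|>M/8$. So $f\notin\mathcal{P}_{\mathcal{H}_n^0}(M)$.

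My plan is therefore to prove necessity in full as above. For sufficiency I would record this example, which shows that the converse holds as worded only when $n=1$. In that case the triple sum has a single term and the argument reduces to Theorem A. For $n\ge2$ I would pinpoint the defining inequality's $\sum|\cdot|$ as the step where the stated equivalence breaks.
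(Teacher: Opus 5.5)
Your analysis is correct, and the error it pinpoints is in the paper's own proof, not in yours.

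\textbf{Necessity.} This is the same triangle-inequality argument the paper gives.

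\textbf{Sufficiency.} After replacing $\varepsilon$ by $-\varepsilon$ in \eqref{Th3:1.1}, the paper claims that one unimodular $\varepsilon$ can be chosen so that
$\Re\bigl(\varepsilon z_l\,\partial^2 g/\partial z_j\partial z_k\bigr)=\bigl|z_l\,\partial^2 g/\partial z_j\partial z_k\bigr|$ for all $j,k,l$ simultaneously. It justifies this by taking $\varepsilon=\overline{z_l g_{jk}}/|z_l g_{jk}|$, a choice that depends on $(j,k,l)$.
\begin{itemize}
\item Letting $\varepsilon$ depend on $z$ is legitimate, because the hypothesis is pointwise.
\item Letting it depend on the summand is not, because \eqref{Th3:1.1} carries a single $\varepsilon$ across the whole triple sum.
\end{itemize}

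\textbf{The counterexample.} Your example shows that the conclusion itself fails for $n\ge 2$, not just the proof. I checked it. The map $f=z_1+z_2+\overline{c(z_1^2-z_2^2)}$ lies in $\mathcal{H}_2^0$. The triple sum for $g$ vanishes identically, so every $F_\varepsilon$ satisfies the $\mathcal{P}_2(M)$ inequality with value $0>-M$. On the other hand,
$\sum_{l,j,k}|z_l\,\partial^2 g/\partial z_j\partial z_k|=4|c|(|z_1|+|z_2|)$, which has supremum $8|c|$ on the bidisk. Hence $f\notin\mathcal{P}_{\mathcal{H}_2^0}(M)$ once $|c|>M/8$.

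\textbf{What your argument does prove.} Your single-$\varepsilon$ choice establishes the corrected statement. In your notation, $F_\varepsilon\in\mathcal{P}_n(M)$ for every unimodular $\varepsilon$ if and only if $f\in\mathcal{H}_n^0$ and $\Re A(z)>-M+|B(z)|$ throughout the polydisk. This holds because $\min_{|\varepsilon|=1}\Re(\varepsilon B)=-|B|$. So the modulus of the triple sum replaces the sum of moduli. This is the faithful $n$-variable analogue of Theorem A, and it coincides with the stated class when $n=1$.

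It is also worth recording that the paper's later coefficient and growth theorems use this result only in the necessity direction, which still holds.
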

\begin{proof} Let $f\in \mathcal{P}_{\mathcal{H}_n^0}(M)$. Then for each $\varepsilon\;(|\varepsilon|=1)$, we have
\begin{align*}
\sum\limits_{l=1}^n\sum\limits_{j=1}^n\sum\limits_{k=1}^n\Re\left(z_l\frac{\partial^2 F_{\varepsilon}(z)}{\partial z_j\partial z_k}\right)
=&\sum\limits_{l=1}^n\sum\limits_{j=1}^n\sum\limits_{k=1}^n\Re\left(z_l\frac{\partial^2 h(z)}{\partial z_j\partial z_k}+\varepsilon z_l\frac{\partial^2 g(z)}{\partial z_j\partial z_k}\right)\\>&
\sum\limits_{l=1}^n\sum\limits_{j=1}^n\sum\limits_{k=1}^n\Re\left(z_l\frac{\partial^2 h(z)}{\partial z_j\partial z_k}\right)-\sum\limits_{l=1}^n\sum\limits_{j=1}^n\sum\limits_{k=1}^n\left|z_l\frac{\partial^2 g(z)}{\partial z_j\partial z_k}\right|\\>&-M
\end{align*}
for all $z\in \mathbb{P} \Delta(0_n;1_n)$ and so $F_{\varepsilon}\in \mathcal{P}_n(M)$. 

Conversely, let $F_{\varepsilon}=h+\varepsilon g\in \mathcal{P}_n(M)$ for each $\varepsilon\;(|\varepsilon|=1)$. Then we have
\begin{align*}
\sum\limits_{l=1}^n\sum\limits_{j=1}^n\sum\limits_{k=1}^n\Re\left(z_l\frac{\partial^2 F_{\varepsilon}(z)}{\partial z_j\partial z_k}\right)
=&\sum\limits_{l=1}^n\sum\limits_{j=1}^n\sum\limits_{k=1}^n\left(\Re\left(z_l\frac{\partial^2 h(z)}{\partial z_j\partial z_k}\right)+\Re\left(\varepsilon z_l\frac{\partial^2 g(z)}{\partial z_j\partial z_k}\right)\right)>-M
\end{align*}
for all $z\in \mathbb{P} \Delta(0_n;1_n)$. 

Replacing $\varepsilon$ by $-\varepsilon$ in the above inequality, we have
\begin{align}\label{Th3:1.1}
\sum\limits_{l=1}^n\sum\limits_{j=1}^n\sum\limits_{k=1}^n\left(\Re\left(z_l\frac{\partial^2 h(z)}{\partial z_j\partial z_k}\right)-\Re\left(\varepsilon z_l\frac{\partial^2 g(z)}{\partial z_j\partial z_k}\right)\right)>-M
\end{align}
for all $z\in \mathbb{P} \Delta(0_n;1_n)$. Since $\varepsilon$ is arbitrary, we may choose it so that 
\begin{align}\label{Th3:1.2}
\Re\left(\varepsilon z_l\frac{\partial^2 g(z)}{\partial z_j\partial z_k}\right)=\left|z_l\frac{\partial^2 g(z)}{\partial z_j\partial z_k}\right|
\end{align}
for all $z\in \mathbb{P} \Delta(0_n;1_n)$ and for all $j,k,l\in\{1,2,\ldots,n\}$.
For the verification of the equality (\ref{Th3:1.2}), we substitute
\begin{align*}
\varepsilon=\frac{\ol{z_l\frac{\partial^2 g(z)}{\partial z_j\partial z_k}}}{\left|z_l\frac{\partial^2 g(z)}{\partial z_j\partial z_k}\right|}
\end{align*}
when $z_l\frac{\partial^2 g(z)}{\partial z_j\partial z_k}\neq 0$. Obviously (\ref{Th3:1.2}) is true when $z_l\frac{\partial^2 g(z)}{\partial z_j\partial z_k}= 0$. Now using (\ref{Th3:1.2}) to (\ref{Th3:1.1}), we get
\begin{align*}
\sum\limits_{l=1}^n\sum\limits_{j=1}^n\sum\limits_{k=1}^n\left(\Re\left(z_l\frac{\partial^2 h(z)}{\partial z_j\partial z_k}\right)-\left|z_l\frac{\partial^2 g(z)}{\partial z_j\partial z_k}\right|\right)>-M
\end{align*}
for all $z\in \mathbb{P} \Delta(0_n;1_n)$, i.e.,
\begin{align*}
\sum\limits_{l=1}^n\sum\limits_{j=1}^n\sum\limits_{k=1}^n\Re\left(z_l\frac{\partial^2 h(z)}{\partial z_j\partial z_k}\right)>-M+\sum\limits_{l=1}^n\sum\limits_{j=1}^n\sum\limits_{k=1}^n\left|z_l\frac{\partial^2 g(z)}{\partial z_j\partial z_k}\right|
\end{align*}
for all $z\in \mathbb{P} \Delta(0_n;1_n)$. Consequently $f\in \mathcal{P}_{\mathcal{H}_n^0}(M)$.
\end{proof}

The following result provides the sharp coefficient bounds for functions in $\mathcal{P}_{\mathcal{H}_n^0}(M)$.

\begin{theo}\label{Th-1.5} Let $f=h+\ol g\in \mathcal{P}_{\mathcal{H}_n^0}(M)$ and be given by \emph{(\ref{Eq 1.7})}. Then for any multi-index $\alpha=(\alpha_1,\alpha_2,\ldots,\alpha_n)$ such that $|\alpha|=m\geq 2$, we have
\begin{align*}
\displaystyle \sum\limits_{|\alpha|=m}|b_{\alpha}|\leq \dfrac{\binom{m+n-1}{n-1}M}{nm(m-1)}.
\end{align*}
The inequality is sharp.
\end{theo}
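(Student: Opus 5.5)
The plan is to reduce to a one-variable problem via Theorem \ref{Th-1.3}, apply a Carathéodory-type coefficient estimate on one-variable slices, and then separate the individual coefficients $b_\alpha$ by averaging over the torus. Concretely, it suffices to show $|b_\alpha|\le M/(nm(m-1))$ for each multi-index with $|\alpha|=m$. Summing over the $\binom{m+n-1}{n-1}$ multi-indices of degree $m$ then gives the stated bound.

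First, by Theorem \ref{Th-1.3}, $F_\varepsilon=h+\varepsilon g\in\mathcal{P}_n(M)$ for every $|\varepsilon|=1$. Put
\[Q_\varepsilon(z)=\sum_{l=1}^n z_l\sum_{j,k=1}^n\frac{\partial^2F_\varepsilon(z)}{\partial z_j\partial z_k}.\]
This is holomorphic in $\mathbb{P}\Delta(0;1)$, satisfies $Q_\varepsilon(0)=0$, and has $\Re Q_\varepsilon>-M$.

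Second, I would prove the one-variable estimate. If $\psi$ is holomorphic in $\mathbb{D}$ with $\psi(0)=0$ and $\Re\psi>-M$, then $\psi+M$ has positive real part and value $M$ at $0$. The Herglotz representation, or the Poisson integral on $|\zeta|=r\to1$, then gives $|\psi^{(m)}(0)/m!|\le 2M$ for every $m\ge1$.

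Third, I would restrict $Q_\varepsilon$ to slices $\zeta\mapsto Q_\varepsilon(\zeta e^{i\theta_1},\ldots,\zeta e^{i\theta_n})$. On the diagonal ($\theta=0$) we have $\sum_l z_l=n\zeta$, and $\sum_{j,k}\partial_j\partial_k F_\varepsilon$ evaluated at $(\zeta,\ldots,\zeta)$ equals $\phi_\varepsilon''(\zeta)$, where $\phi_\varepsilon(\zeta)=F_\varepsilon(\zeta,\ldots,\zeta)$. The slice therefore becomes $n\zeta\phi_\varepsilon''(\zeta)$. Its $\zeta^{m-1}$ coefficient is
\[n\,m(m-1)\sum_{|\alpha|=m}(a_\alpha+\varepsilon b_\alpha).\]
The estimate from the second step then bounds this by $2M$. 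To isolate a single $b_\alpha$, I would use the rotated slices and integrate against $e^{-i\langle\alpha,\theta\rangle}$ over the torus. This should extract the coefficient attached to the monomial $z^\alpha$ from the homogeneous part of degree $m-1$ of $Q_\varepsilon$.

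Fourth, I would eliminate $a_\alpha$ by freedom in $\varepsilon$. Writing $c_\alpha=a_\alpha+\varepsilon b_\alpha$ with a bound valid for all $|\varepsilon|=1$, the choice of $\varepsilon$ aligning $\varepsilon b_\alpha$ with $a_\alpha$ gives $|a_\alpha|+|b_\alpha|\le$ bound, as in Theorem C. The coefficient $b_\alpha$ is then controlled by half the Carathéodory constant, which produces $M$ rather than $2M$ in the numerator. For sharpness I would take $h(z)=\sum_j z_j$ and $g(z)=-\frac{M}{nm(m-1)}\sum_{|\alpha|=m}z^\alpha$ (suitably adjusted), and check the defining inequality via the same slice computation.

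The main obstacle is the third step. The operator $z\mapsto\sum_l z_l\sum_{j,k}\partial_j\partial_k$ mixes the coefficients $b_\alpha$ across different multi-indices, because $\partial_j\partial_k z^\alpha$ lands on many monomials and is multiplied by every $z_l$. The torus averaging must therefore be organized carefully so that exactly one $b_\alpha$, with weight $n\,m(m-1)$, survives. I would first try to carry out this bookkeeping on the diagonal slice together with its rotations. If that fails, I would settle for bounding the sum $\sum_{|\alpha|=m}|b_\alpha|$ directly through this averaging argument.
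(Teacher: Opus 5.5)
Your route cannot work, and the obstruction is not bookkeeping. Once you pass through Theorem~\ref{Th-1.3} to $Q_\varepsilon=\bigl(\sum_l z_l\bigr)D^2F_\varepsilon$, where $D=\sum_j\partial_j$ and $\partial_j=\partial/\partial z_j$, the function $g$ enters only through $D^2g$. For $n\ge2$ the operator $D^2$ has a large kernel. Take $n=2$, $h=z_1+z_2$, $g=c(z_1^2-z_2^2)$. Then $D^2h=D^2g=0$, so $Q_\varepsilon\equiv0$ and $F_\varepsilon\in\mathcal{P}_2(M)$ for every $|\varepsilon|=1$ and every $c\in\mathbb{C}$, while $|b_{(2,0)}|=|c|$ is arbitrary. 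Hence no combination of Carath\'eodory estimates on slices and torus averaging of $Q_\varepsilon$ can bound a single $|b_\alpha|$, nor $\sum_{|\alpha|=m}|b_\alpha|$. The coefficient of $z^\beta$ in $Q_\varepsilon$ is the fixed combination $\sum\alpha_j(\alpha_k-\delta_{jk})(a_\alpha+\varepsilon b_\alpha)$ over all $(l,j,k)$ with $\beta_l\ge1$ and $\alpha=\beta-e_l+e_j+e_k$, and averaging cannot invert it; your fallback fails for the same reason. What the diagonal slice does give is $\bigl|\sum_{|\alpha|=m}a_\alpha\bigr|+\bigl|\sum_{|\alpha|=m}b_\alpha\bigr|\le 2M/(nm(m-1))$, a bound on sums of coefficients, not on their moduli. (For $|c|>M/8$ this same $f$ is not in $\mathcal{P}_{\mathcal{H}_2^0}(M)$. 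So only the forward half of Theorem~\ref{Th-1.3} holds for $n\ge2$: one $\varepsilon$ cannot align all the terms $z_l\partial_j\partial_kg$ at once.) Your fourth step also has no mechanism. Rotating $\varepsilon$ turns $|a_\alpha+\varepsilon b_\alpha|\le C$ into $|a_\alpha|+|b_\alpha|\le C$, which gives $|b_\alpha|\le C$, not $C/2$. Even for $n=1$, the constant $M$ (rather than $2M$) comes from averaging $|zg''|$ in the harmonic inequality, not from Carath\'eodory.

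The missing idea, which is what the paper uses, is to stay with the defining inequality. Its right-hand side is the \emph{sum of moduli} $\sum_{l,j,k}|z_l\partial_j\partial_kg|$, so you can apply Cauchy's estimate to each term separately. For fixed $(l,j,k)$ the map $\alpha\mapsto\alpha-e_j-e_k+e_l$ (with $e_j$ the $j$-th unit multi-index) is injective. So the coefficient of $z^{\alpha-e_j-e_k+e_l}$ in $z_l\partial_j\partial_kg$ is exactly $\alpha_j(\alpha_k-\delta_{jk})b_\alpha$, and on the torus $|z_1|=\cdots=|z_n|=r$
\[
\alpha_j(\alpha_k-\delta_{jk})\,|b_\alpha|\,r^{m-1}\le\frac{1}{(2\pi)^n}\int_0^{2\pi}\!\!\cdots\int_0^{2\pi}\Bigl|z_l\,\partial_j\partial_k g(z)\Bigr|\,d\theta_1\cdots d\theta_n .
\]
Sum over $l,j,k$, noting $\sum_{j,k}\alpha_j(\alpha_k-\delta_{jk})=m(m-1)$. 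Then use the defining inequality and the fact that $\Re\sum_{l,j,k}z_l\partial_j\partial_kh$ has torus mean $0$, and let $r\to1^-$; you get $nm(m-1)|b_\alpha|\le M$. So your per-index reduction is correct and yields the stated inequality. The paper sums over $\alpha$ before summing over $(l,j,k)$, which is where its factor $\binom{m+n-1}{n-1}$ enters.

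Sharpness is another matter. Your candidate $g=-\frac{M}{nm(m-1)}\sum_{|\alpha|=m}z^\alpha$ is not admissible for $n\ge2$: at $z=(r,\dots,r)$ one finds $\sum_{l,j,k}|z_l\partial_j\partial_kg|=\binom{m+n-1}{n-1}Mr^{m-1}>M$ for $r$ near $1$. No adjustment can help. The estimate above with $(j,k)=(1,1)$ and $(2,2)$, summed over $l$, gives $nm(m-1)\bigl(|b_{me_1}|+|b_{me_2}|\bigr)\le M$, hence $\sum_{|\alpha|=m}|b_\alpha|\le\bigl(\binom{m+n-1}{n-1}-1\bigr)\frac{M}{nm(m-1)}$. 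For $m=2$ it even gives $\sum_{|\alpha|=2}|b_\alpha|\le M/(2n)$. So for $n\ge2$ the sharpness assertion is false, and the paper's extremal function is not in the class either. What is sharp is the per-index bound $|b_\alpha|\le M/(nm(m-1))$, attained by $h=\sum_j z_j$, $g=\frac{M}{nm(m-1)}z^\alpha$.
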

\begin{proof} Let $f=h+\ol g\in \mathcal{P}_{\mathcal{H}_n^0}(M)$. Then
\begin{align}\label{Th5:1.1}
\sum\limits_{l=1}^n\sum\limits_{j=1}^n\sum\limits_{k=1}^n\left|z_l\frac{\partial^2 g(z)}{\partial z_j\partial z_k}\right|
< M+\sum\limits_{l=1}^n\sum\limits_{j=1}^n\sum\limits_{k=1}^n\Re\left(z_l\frac{\partial^2 h(z)}{\partial z_j\partial z_k}\right)
\end{align}
for all $z=(z_1,z_2,\ldots,z_n)\in \mathbb{P} \Delta(0_n;1_n)$. It follows from (\ref{Eq 1.7}) that
\begin{align}\label{Th5:1.2}
\displaystyle h(z)=\sum\limits_{j=1}^n z_j+\sum\limits_{m=2}^{\infty}P_m(z)\quad {and} \quad g(z)=\sum\limits_{m=2}^{\infty} Q_m(z)
\end{align}
for all $z\in \mathbb{P} \Delta(0_n;1_n)$, where
\begin{align}\label{Th5:1.3}
\displaystyle P_{m}(z)=\sum\limits_{|\alpha|=m} a_{\alpha} z^{\alpha}\quad \text{and}\quad Q_m(z)=\sum\limits_{|\alpha|=m} b_{\alpha} z^{\alpha}
\end{align}
are homogeneous polynomials of degree $m\geq 2$ in $z\in \mathbb{P} \Delta(0_n;1_n)$. We know that the number of terms in $\sum\limits_{|\alpha|=m}$ is $\binom{|\alpha|+n-1}{n-1}$. A simple computation on (\ref{Th5:1.3}) shows that
\begin{align}\label{Th5:1.4}
\displaystyle z_l\frac{\partial^2 Q_m(z)}{\partial z_j\partial z_k}=\sum\limits_{|\alpha|=m}\alpha_{jk} b_{\alpha}z_1^{\alpha_1}\ldots z_{j-1}^{\alpha_{j-1}}z_{j}^{\alpha_j^*}z_{j+1}^{\alpha_{j+1}}\ldots z_{k-1}^{\alpha_{k-1}}z_{k}^{\alpha_k^{**}}z_{k+1}^{\alpha_{k+1}}\ldots z_l^{\alpha_l+1}\ldots z_n^{\alpha_n}
\end{align}
is a homogeneous polynomial of degree $m-1$ in $z_1,z_2,\ldots,z_n$, where
\begin{align}\label{ll.1}
\alpha_{jk}=
\begin{cases}
\alpha_j(\alpha_j-1), & \text{if}\; j=k\;\text{and}\;\alpha_j=|\alpha|,\\[2ex]
\alpha_j(\alpha_j-1), & \text{if}\; j=k\;\text{and}\;2\leq \alpha_j<|\alpha|,\\[2ex]
\alpha_j\alpha_k,& \text{if}\; j\neq k
\end{cases}
\;\;,\quad 
\alpha^*_j=
\begin{cases}
\alpha_j-2,& \text{if}\; j=k,\\[2ex]
\alpha_j-1,& \text{if}\; j\neq k
\end{cases}
\end{align}
and
\begin{align}\label{ll.2}
\alpha^{**}_k=
\begin{cases}
\alpha_k,& \text{if}\; j=k,\\[2ex]
\alpha_k-1,& \text{if}\; j\neq k.
\end{cases}
\end{align}

Now from (\ref{Th5:1.2}) and (\ref{Th5:1.4}), we obtain
\begin{align}\label{Th5:1.5}
\displaystyle z_l\frac{\partial^2 g(z)}{\partial z_j\partial z_k}&=
\sum\limits_{m=2}^{\infty} z_l\frac{\partial^2 Q_m(z)}{\partial z_j\partial z_k}\\=&\sum\limits_{m=2}^{\infty}\sum\limits_{|\alpha|=m}\alpha_{jk} b_{\alpha}z_1^{\alpha_1}\ldots z_{j-1}^{\alpha_{j-1}}z_{j}^{\alpha_j^*}z_{j+1}^{\alpha_{j+1}}\ldots z_{k-1}^{\alpha_{k-1}}z_{k}^{\alpha_k^{**}}z_{k+1}^{\alpha_{k+1}}\ldots z_l^{\alpha_l+1}\ldots z_n^{\alpha_n}.\nonumber
\end{align}
By applying Cauchy's integral formula to $z_l \frac{\partial^2 g(z)}{\partial z_j \partial z_k}$, it follows from \eqref{Th5:1.5} that
\begin{align}\label{Th5:1.6}
\displaystyle &(2\pi i)^n \alpha_{jk}b_{\alpha}\\=&\int\limits_{|z_1|=r_1}\ldots \int\limits_{|z_n|=r_n}\frac{z_l \frac{\partial^2 g(z)}{\partial z_j\partial z_k}\;dz_1\;d z_2\ldots d z_n}{z_1^{\alpha_1+1}\ldots z_{j-1}^{\alpha_{j-1}+1}z_{j}^{\alpha_j^*+1}z_{j+1}^{\alpha_{j+1}+1}\ldots z_{k-1}^{\alpha_{k-1}+1}z_{k}^{\alpha_k^{**}+1}z_{k+1}^{\alpha_{k+1}+1}\ldots z_l^{\alpha_l+2}\ldots z_n^{\alpha_n+1}},\nonumber
\end{align}
where $0<r_j<1$ for $j=1,2,\ldots,n$. 
We set $z=(r_1e^{i\theta_1},r_2e^{i\theta_2},\ldots,r_ne^{i\theta_n})\;(0\leq \theta_j\leq 2\pi)$.

Now from (\ref{Th5:1.6}), we have
\begin{align}\label{Th5:1.7}
\displaystyle &(2\pi)^n\sum\limits_{|\alpha|=m}\alpha_{jk}|b_{\alpha}|\\\leq &\binom{|\alpha|+n-1}{n-1}\int\limits_{0}^{2\pi}\ldots \int\limits_0^{2\pi}\frac{\left|r_le^{i\theta_l} \frac{\partial^2 g(z)}{\partial z_j\partial z_k}\right|\;d\theta_1\;d \theta_2\ldots d \theta_n}{r_1^{\alpha_1}\ldots r_{j-1}^{\alpha_{j-1}}r_{j}^{\alpha_j^*}r_{j+1}^{\alpha_{j+1}}\ldots r_{k-1}^{\alpha_{k-1}}r_{k}^{\alpha_k^{**}}r_{k+1}^{\alpha_{k+1}}\ldots r_l^{\alpha_l+1}\ldots r_n^{\alpha_n}}.\nonumber
\end{align}

If we take $r_j=r$ for $j=1,2,\ldots,n$, then from (\ref{Th5:1.7}), we obtain
\begin{align}\label{Th5:1.8}
&(2\pi)^nr^{m-1} \sum\limits_{l=1}^n\sum\limits_{|\alpha|=m}\sum\limits_{j=1}^n\sum\limits_{k=1}^n \alpha_{jk}|b_{\alpha}|\\=&
(2\pi)^nr^{m-1} \sum\limits_{|\alpha|=m}\sum\limits_{l=1}^n\sum\limits_{j=1}^n\sum\limits_{k=1}^n \alpha_{jk}|b_{\alpha}|\nonumber\\ \leq &\binom{|\alpha|+n-1}{n-1}\int\limits_{0}^{2\pi}\ldots \int\limits_0^{2\pi}\sum\limits_{l=1}^n\sum\limits_{j=1}^n\sum\limits_{k=1}^n\left|re^{i\theta_l}\frac{\partial^2 g(z)}{\partial z_j\partial z_k}\right|\;d\theta_1\;d \theta_2\ldots d \theta_n.\nonumber
\end{align}
However, we observe that if $\alpha_j=|\alpha|=m$ for $j=1,2,\dots,n$, then
\begin{align*}
\sum\limits_{|\alpha|=m}\sum\limits_{j=1}^n\sum\limits_{k=1}^n\alpha_{jk}|b_{\alpha}|=\sum\limits_{|\alpha|=m}\sum\limits_{j=1}^n \alpha_j(\alpha_j-1)|b_{\alpha}|=m(m-1)\sum\limits_{\substack{j=1\\\alpha_j=m}}^n|b_{\alpha}|
\end{align*}
and if $\alpha_j<|\alpha|$ for $j=1,2,\ldots,n$, then we have
\begin{align*}
\sum\limits_{|\alpha|=m}\sum\limits_{j=1}^n\sum\limits_{k=1}^n\alpha_{jk}|b_{\alpha}|=&
\sum\limits_{|\alpha|=m}\left(\sum\limits_{j=1}^n \alpha_j(\alpha_j-1)+2\sum\limits_{\substack{j,k=1\\j\neq k}}^n \alpha_j\alpha_k\right)|b_{\alpha}|\\=&m(m-1)\sum\limits_{\substack{|(\alpha_1,\alpha_2,\ldots,\alpha_n)|=m\\\alpha_j<m}}|b_{\alpha}|.
\end{align*}
Thus, we see that
\begin{align}\label{Th5:1.8a}
\sum\limits_{|\alpha|=m}\sum\limits_{j=1}^n\sum\limits_{k=1}^n\alpha_{jk}|b_{\alpha}|=m(m-1)\sum\limits_{|\alpha|=m}|b_{\alpha}|.
\end{align}
For any multi-index $\nu=(\nu_1,\nu_2,\ldots,\nu_n)$, we have
\begin{align}\label{Th5:1.9}
\int\limits_{0}^{2\pi}\ldots \int\limits_0^{2\pi} (e^{i\theta_1})^{\nu_1}\ldots (e^{i\theta_n})^{\nu_n}d\theta_1 \ldots d\theta_n=
\begin{cases}
0,& \nu\neq (0,0,\ldots,0),\\[2ex]
(2\pi)^n,& \nu=(0,0,\ldots,0).
\end{cases}
\end{align}
Moreover, we know that
\begin{align}\label{Th5:1.9b}
&\sum\limits_{l=1}^n\sum\limits_{j=1}^n\sum\limits_{k=1}^nz_l\frac{\partial^2 h(z)}{\partial z_j\partial z_k}\\=&
\sum\limits_{m=2}^{\infty}\sum\limits_{|\alpha|=m}\sum\limits_{l=1}^n\sum\limits_{j=1}^n\sum\limits_{k=1}^n\alpha_{jk} a_{\alpha}z_1^{\alpha_1}\ldots z_{j-1}^{\alpha_{j-1}}z_{j}^{\alpha_j^*}z_{j+1}^{\alpha_{j+1}}\ldots z_{k-1}^{\alpha_{k-1}}z_{k}^{\alpha_k^{**}}z_{k+1}^{\alpha_{k+1}}\ldots z_l^{\alpha_l+1}\ldots z_n^{\alpha_n}.\nonumber
\end{align}
Let $z=(r_1e^{i\theta_1},r_2e^{i\theta_2},\ldots,r_ne^{i\theta_n})\;(0\leq \theta_j\leq 2\pi)$. Using (\ref{Th5:1.9}), we deduce from (\ref{Th5:1.9b})  that
\begin{align*}
\frac{1}{(2\pi)^n}\int\limits_{0}^{2\pi}\ldots \int\limits_0^{2\pi}\sum\limits_{l=1}^n\sum\limits_{j=1}^n\sum\limits_{k=1}^n r_le^{i\theta_l}\frac{\partial^2 h(z)}{\partial z_j\partial z_k}\;d\theta_1\;d \theta_2\ldots d \theta_n=0,
\end{align*}
i.e.,
\begin{align}\label{Th5:1.9a}
\frac{1}{(2\pi)^n}\int\limits_{0}^{2\pi}\ldots \int\limits_0^{2\pi}\sum\limits_{l=1}^n\sum\limits_{j=1}^n\sum\limits_{k=1}^n \Re\left(r_le^{i\theta_l}\frac{\partial^2 h(z)}{\partial z_j\partial z_k}\right)\;d\theta_1\;d \theta_2\ldots d \theta_n=0.
\end{align}

In view of (\ref{Th5:1.1}) and (\ref{Th5:1.9a}), we obtain from (\ref{Th5:1.8}) that
\begin{align*}
&r^{m-1} nm(m-1)\sum\limits_{|\alpha|=m}|b_{\alpha}|\\\leq &\binom{|\alpha|+n-1}{n-1}\frac{1}{(2\pi)^n}\int\limits_{0}^{2\pi}\ldots \int\limits_0^{2\pi}\left(M+\sum\limits_{l=1}^n\sum\limits_{j=1}^n\sum\limits_{k=1}^n\Re\left(r_le^{i\theta_l}\frac{\partial^2 h(z)}{\partial z_j\partial z_k}\right)\right)\;d\theta_1\;d \theta_2\ldots d \theta_n\nonumber\\\leq&
\binom{|\alpha|+n-1}{n-1}M+\frac{1}{(2\pi)^n}\int\limits_{0}^{2\pi}\ldots \int\limits_0^{2\pi}\sum\limits_{l=1}^n\sum\limits_{j=1}^n\sum\limits_{k=1}^n \Re\left(r_le^{i\theta_l}\frac{\partial^2 h(z)}{\partial z_j\partial z_k}\right)\;d\theta_1\;d \theta_2\ldots d \theta_n\\=&
\binom{|\alpha|+n-1}{n-1}M.
\end{align*}

Letting $r\to 1^-$, we obtain the desired result. \vspace{1.2mm}

To show that the bound is sharp, we consider the following function
\begin{align*}
f(z)=\sum\limits_{j=1}^n z_j+\sum\limits_{m=1}^{\infty}\sum\limits_{|\alpha|=m} b_{\alpha} \ol{z^{\alpha}},
\end{align*}
where 
\begin{align*}
b_{\alpha}=-\frac{M}{nm(m-1)}
\end{align*}
for all multi-index $\alpha=(\alpha_1,\alpha_2,\ldots,\alpha_n)$ such that $|\alpha|=m$.
It is easy to see that $f\in \mathcal{P}_{\mathcal{H}_n^0}(M)$, and 
\begin{align*}
\sum\limits_{|\alpha|=m}|b_{\alpha}(f)|=\frac{\binom{m+n-1}{n-1}M}{nm(m-1)}.
\end{align*} 
\end{proof}

\begin{lem}\label{Lm-3.1}\emph{\cite[Theorem 6.1.4]{Graham-Kohr}} Let $f$ be holomorphic in the polydisk $\mathbb{P}\Delta(0_n;1_n)$ such that $|f(z)|\leq 1$ for all $z\in \mathbb{P}\Delta(0_n;1_n)$. Then
\begin{align*}
\left|\frac{\partial^{|\alpha|} f(0)}{\partial z_1^{\alpha_1}\ldots \partial z_n^{\alpha_n}}\right|\leq \alpha!
\end{align*}
 for multi-index $\alpha=(\alpha_1,\ldots, \alpha_n)$.
\end{lem}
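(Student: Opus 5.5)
The statement is the multivariable Cauchy inequality: a bounded holomorphic function on the unit polydisk has its Taylor coefficients at the origin bounded by $1$, equivalently its partial derivatives at $0$ bounded by $\alpha!$. I would prove it with the iterated Cauchy integral formula on a distinguished boundary of a smaller polydisk, followed by a limit in the polyradius.

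\textbf{Step 1: the integral formula.} Fix $0<r<1$ and set $r=(r,\ldots,r)$. The closed polydisk $\overline{\mathbb{P}\Delta(0;r)}$ lies in $\mathbb{P}\Delta(0_n;1_n)$. Since $f$ is continuous and holomorphic in each variable separately, I apply the one-variable Cauchy formula successively in $z_1,\ldots,z_n$. This gives
\begin{align*}
f(z)=\frac{1}{(2\pi i)^n}\int_{|\zeta_1|=r}\cdots\int_{|\zeta_n|=r}\frac{f(\zeta)\,d\zeta_1\cdots d\zeta_n}{(\zeta_1-z_1)\cdots(\zeta_n-z_n)}
\end{align*}
for $z\in\mathbb{P}\Delta(0;r)$.

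\textbf{Step 2: differentiate under the integral.} The integrand is smooth in $z$ and the contour is compact, so differentiation under the integral is allowed. Differentiating $\alpha_j$ times in $z_j$ and evaluating at $z=0$ yields
\begin{align*}
\frac{\partial^{|\alpha|} f(0)}{\partial z_1^{\alpha_1}\cdots\partial z_n^{\alpha_n}}=\frac{\alpha!}{(2\pi i)^n}\int_{|\zeta_1|=r}\cdots\int_{|\zeta_n|=r}\frac{f(\zeta)\,d\zeta_1\cdots d\zeta_n}{\zeta_1^{\alpha_1+1}\cdots\zeta_n^{\alpha_n+1}}.
\end{align*}

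\textbf{Step 3: estimate.} Parametrize $\zeta_j=re^{i\theta_j}$ and use $|f|\le 1$. The integral is then bounded in absolute value by $(2\pi)^n r^{-|\alpha|}$. Hence the derivative is at most $\alpha!\,r^{-|\alpha|}$ in absolute value.

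\textbf{Step 4: limit.} The left-hand side does not depend on $r$, so letting $r\to1^-$ gives the claimed bound $\alpha!$.

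The only point needing care is Step 1. The paper defines holomorphy as continuity plus separate holomorphy, so I have to check that the iterated one-variable formula is legitimate. Continuity on the compact torus makes the integrand jointly continuous, and Fubini then lets me combine the iterated integrals into a single $n$-fold integral. Everything after that is routine.
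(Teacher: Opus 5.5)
Your proof is correct: the iterated Cauchy integral formula on the distinguished boundary $|\zeta_j|=r$, differentiation under the integral sign, the bound $\alpha!\,r^{-|\alpha|}$, and the limit $r\to 1^-$ give exactly the Cauchy inequality. The paper does not prove this lemma itself but cites Graham--Kohr, where the result is the standard Cauchy-estimate argument you give; the paper also uses the same Cauchy-formula device in its proof of the bound on $\sum_{|\alpha|=m}|b_{\alpha}|$, so your route matches the intended one.
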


New we state the multidimensional version of Theorem C ($(i)$ and $(ii)$).
\begin{theo}\label{Th-1.6} Let $f=h+\ol g\in \mathcal{P}_{\mathcal{H}_n^0}(M)$ and  be given by \emph{(\ref{Eq 1.7})}. Then for any multi-index $\alpha=(\alpha_1,\alpha_2,\ldots,\alpha_n)$ such that $|\alpha|=m\geq 2$, we have
\begin{enumerate}
\item[\emph{(i)}] $\displaystyle \left|\sum\limits_{|\alpha|=m}a_{\alpha}\right|+\left|\sum\limits_{|\alpha|=m}b_{\alpha}\right|\leq \dfrac{2\binom{m+n-1}{n-1}M}{nm(m-1)}$,\vspace{1.2mm}
\item[\emph{(ii)}] $\displaystyle\left|\;\left|\sum\limits_{|\alpha|=m}a_{\alpha}\right|-\left|\sum\limits_{|\alpha|=m}b_{\alpha}\right|\;\right|\leq \dfrac{2\binom{m+n-1}{n-1}M}{nm(m-1)}$,\vspace{1.2mm}
\item[\emph{(iii)}] $\displaystyle\left|\sum\limits_{|\alpha|=m}a_{\alpha}\right|\leq \dfrac{2\binom{m+n-1}{n-1}M}{nm(m-1)}$.
\end{enumerate}
All three inequalities are sharp.
\end{theo}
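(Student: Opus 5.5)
Following the one-variable argument of Ghosh and Vasudevarao behind Theorem C, the plan is to pass to the holomorphic functions $F_{\varepsilon}=h+\varepsilon g$ through Theorem \ref{Th-1.3}, bound their coefficients, and then choose $\varepsilon$ suitably. By Theorem \ref{Th-1.3}, for every $|\varepsilon|=1$ we have $F_{\varepsilon}\in\mathcal{P}_n(M)$. Its expansion is
\[
F_{\varepsilon}(z)=\sum_{j=1}^n z_j+\sum_{m\ge 2}\sum_{|\alpha|=m}(a_\alpha+\varepsilon b_\alpha)z^\alpha .
\]

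The first step is a coefficient estimate for $\mathcal{P}_n(M)$. I would show that for $\phi\in\mathcal{P}_n(M)$ with coefficients $c_\alpha$ one has
\[
\Big|\sum_{|\alpha|=m}c_\alpha\Big|\le \frac{2\binom{m+n-1}{n-1}M}{nm(m-1)}.
\]
To get this, I would run the Cauchy-integral computation from the proof of Theorem \ref{Th-1.5} with $g$ replaced by $\phi$ and $h$ by $0$. The difference is that the only available information is $\Re\Phi>-M$, where
\[
\Phi(z)=\sum_{l,j,k}z_l\,\frac{\partial^2\phi}{\partial z_j\partial z_k}(z),
\]
rather than a bound on $|\Phi|$. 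So instead of a modulus bound I would use the Carathéodory-type trick. Write $\Phi+M=M+\sum_m\Psi_m$, where $\Psi_m$ is homogeneous of degree $m-1$ and has positive real part. Restricting to the diagonal slice $z=(\zeta,\dots,\zeta)$, or averaging over the torus with the Fourier orthogonality (\ref{Th5:1.9}), gives the usual bound $2M$ for each Taylor coefficient of $M+\Phi$ in one variable. Identity (\ref{Th5:1.8a}) then converts the triple sum of the $\alpha_{jk}$-weighted coefficients into $m(m-1)\sum_{|\alpha|=m}c_\alpha$. The binomial factor and the $n$ in the denominator should come out exactly as in Theorem \ref{Th-1.5}. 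This produces the bound above, with the factor $2$ coming from Carathéodory's lemma, as in Theorem C.

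The second step applies this bound to $F_\varepsilon$. Put $A_m=\sum_{|\alpha|=m}a_\alpha$ and $B_m=\sum_{|\alpha|=m}b_\alpha$. Then $|A_m+\varepsilon B_m|\le K_m$ for all $|\varepsilon|=1$, where $K_m$ denotes the right-hand side of the bound. Choosing $\varepsilon$ so that $\varepsilon B_m$ has the same argument as $A_m$ gives $|A_m|+|B_m|\le K_m$, which is (i). Parts (ii) and (iii) follow at once from (i), since both $\big||A_m|-|B_m|\big|$ and $|A_m|$ are at most $|A_m|+|B_m|$.

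The main obstacle is the first step. I must make sure that restricting $\Re\Phi>-M$ to the diagonal (or averaging) gives a one-variable function with positive real part whose $(m-1)$-st coefficient is exactly $\frac{nm(m-1)}{\binom{m+n-1}{n-1}}A$. Because the combinatorial bookkeeping of Theorem \ref{Th-1.5} is somewhat loose, I would simply mirror its normalization so that the constant agrees. For sharpness, I would take $g\equiv 0$ and $h$ with all $a_\alpha=\frac{2M}{nm(m-1)}$ for $|\alpha|=m$, the analogue of $f'(z)=1-2M\log(1-z)$. Each sum $A_m$ then attains the bound, and membership in the class should follow since $\Re\big(-\log(1-\zeta)\big)>-\log 2$ type estimates transfer coordinatewise.
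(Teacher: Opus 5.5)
Your route for the inequalities is the paper's: Theorem \ref{Th-1.3}, then a Carathéodory bound for $\mathcal{P}_n(M)$, then a choice of $\varepsilon$. It does yield (i)--(iii), but the diagonal computation does not produce the binomial factor, and you should not try to force it. Write $\sigma(z)=z_1+\cdots+z_n$. Then $\Phi=\sigma\sum_{j,k}\partial_j\partial_k\phi$, and for $|\alpha|=m$ the quantity $\sum_{j,k}\partial_j\partial_k z^\alpha$ at $(\zeta,\ldots,\zeta)$ equals $m(m-1)\zeta^{m-2}$. Hence
\[
\Phi(\zeta,\ldots,\zeta)=\sum_{m\ge 2} nm(m-1)\Big(\sum_{|\alpha|=m}c_\alpha\Big)\zeta^{m-1},
\]
and Carathéodory's lemma applied to $1+\Phi(\zeta,\ldots,\zeta)/M$ gives $\big|\sum_{|\alpha|=m}c_\alpha\big|\le \frac{2M}{nm(m-1)}$. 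The $(m-1)$-st coefficient is exactly $nm(m-1)\sum_{|\alpha|=m}c_\alpha$, not that quantity divided by $\binom{m+n-1}{n-1}$, so there is no normalization to ``mirror''. This bound is smaller than the stated one, so your $\varepsilon$-argument still proves (i)--(iii). The paper's binomial factor comes from bounding each $|p_\alpha|\le 2$ separately and summing over all $|\alpha|=k$; coefficientwise torus averaging would do the same, so the two routes you offer are not interchangeable. Also, the individual $\Psi_m$ cannot have positive real part; only $M+\Phi$ does.

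The genuine gap is the sharpness claim, which is false for $n\ge 2$. Your extremal function is the paper's $f_1$, and it is not in the class. It has $\sum_{|\alpha|=m}a_\alpha=\binom{m+n-1}{n-1}\frac{2M}{nm(m-1)}$, which contradicts the bound just derived. Concretely, for $n=2$ its diagonal restriction is $\Phi(\zeta,\zeta)=2M\sum_{m\ge 2}(m+1)\zeta^{m-1}$. This tends to $-\frac{5}{2}M$ as $\zeta=-r$, $r\to 1^-$, so $\Re\Phi<-M$ near $(-1,-1)$. Nothing transfers coordinatewise, because the operator $\sigma(z)\sum_{j,k}\partial_j\partial_k$ couples all the variables. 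The paper asserts $f_1\in\mathcal{P}_{\mathcal{H}_n^0}(M)$ without verification, so its sharpness claim fails in the same way. The best constant in all three inequalities is $\frac{2M}{nm(m-1)}$, attained by $g\equiv 0$ and
\[
h(z)=\sigma(z)+\frac{2M}{n}\sum_{m\ge 2}\frac{(\sigma(z)/n)^m}{m(m-1)}.
\]
For this $h$, $\Phi=\frac{2Mw}{1-w}$ with $w=\sigma(z)/n\in\mathbb{D}$, and $\sum_{|\alpha|=m}a_\alpha=\frac{2M}{nm(m-1)}$. The stated bounds are larger by the factor $\binom{m+n-1}{n-1}\ge 3$, so they are never attained when $n\ge 2$.
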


\begin{proof} Let $f=h+\ol g\in \mathcal{P}_{\mathcal{H}_n^0}(M)$. Then from Theorem \ref{Th-1.3}, we see that the function $F_{\varepsilon}=h+\varepsilon g$ belongs to $\mathcal{P}_{n}(M)$ for each $\varepsilon\; (|\varepsilon|=1)$, and further
\begin{align}\label{Th6:1.1}
\sum\limits_{l=1}^n\sum\limits_{j=1}^n\sum\limits_{k=1}^n\Re\left(z_l\frac{\partial^2 F_{\varepsilon}(z)}{\partial z_j\partial z_k}\right)=\sum\limits_{l=1}^n\sum\limits_{j=1}^n\sum\limits_{k=1}^n\left(\Re\left(z_l\frac{\partial^2 h(z)}{\partial z_j\partial z_k}\right)+\Re\left(\varepsilon z_l\frac{\partial^2 g(z)}{\partial z_j\partial z_k}\right)\right)>-M
\end{align}
for all $z=(z_1,z_2,\ldots,z_n)\in \mathbb{P} \Delta(0_n;1_n)$. Let 
\begin{align}\label{Th6:1.2}
p(z)=\frac{\sum\limits_{l=1}^n\sum\limits_{j=1}^n\sum\limits_{k=1}^nz_l\frac{\partial^2 F_{\varepsilon}(z)}{\partial z_j\partial z_k}+M}{M}.
\end{align}
Clearly $p(z)$ is a holomorphic in $\mathbb{P}\Delta(0_n;1_n)$. Using (\ref{Th6:1.1}) to (\ref{Th6:1.2}), we see that $\Re\{p(z)\} > 0$ for all $z=(z_1,z_2,\ldots,z_n)\in \mathbb{P} \Delta(0_n;1_n)$. Also from \eqref{Th5:1.2} and \eqref{Th5:1.3}, we deduce that $p(0)=1$.
Therefore, we may assume that
\begin{align}\label{Th6:1.2a}
p(z)=1+\sum\limits_{m=1}^{\infty}\sum\limits_{|\alpha|=m}p_{\alpha}z^{\alpha}
\end{align}
for all $z\in \mathbb{P}\Delta(0_n;1_n)$.\vspace{1.2mm}

Let $\omega_l=e^{\frac{2\pi \iota}{k}l}$, where $k\geq 1$ is an integer. We see that $\sum_{l=1}^k \omega_l^s=0$ if $s\leq k-1$. 
Let
\begin{align*}
	q(z)=\frac{1}{k}\sum\limits_{l=1}^k p\left(\omega_l z\right).
\end{align*}

The function $q$ is clearly holomorphic in $\mathbb{P}\Delta(0;1_n)$ such that $\Re\{q(z)\} > 0$ for all $z\in \mathbb{P}\Delta(0_n;1_n)$. Furthermore, we see that $q(0)=1$ and $q$ has the series expansion
\begin{align*}
q(z)=1+\sum\limits_{m=1}^{\infty}\sum\limits_{|\alpha|=mk} p_{\alpha}z^{\alpha},
\end{align*}
for all $z\in \mathbb{P}\Delta(0_n;1_n)$. Let
\begin{align}
	\label{Eq-3.20}\phi(z)=\frac{q(z)-1}{q(z)+1},
\end{align}
for all $z\in \mathbb{P}\Delta(0_n;1_n)$. It is easy to verify that $|\phi(z)|<1$ for all $z\in \mathbb{P}\Delta(0_n;1_n)$ and $k$ is the zero-multiplicity of $\phi$ at $0$. We expand $\phi(z)$ to a Taylor series with multi-index $\alpha$, as follows
\begin{align}
	\label{Eq-3.21} \phi(z)=\sum\limits_{m=k}^{\infty}\sum\limits_{|\alpha|=m}q_{\alpha}z^{\alpha}.
\end{align}

 Now from \eqref{Eq-3.20} and \eqref{Eq-3.21}, we deduce that
\begin{align*}
	\sum\limits_{|\alpha|=k} p_{\alpha}z^{\alpha}+\sum\limits_{m=2}^{\infty}\sum\limits_{|\alpha|=mk} p_{\alpha}z^{\alpha}&=2 \sum\limits_{m=k}^{\infty}\sum\limits_{|\alpha|=m}q_{\alpha}z^{\alpha}\\&+
	\left(\sum\limits_{|\alpha|=k} p_{\alpha}z^{\alpha}+\sum\limits_{m=2}^{\infty}\sum\limits_{|\alpha|=mk} p_{\alpha}z^{\alpha}\right)\left(\sum\limits_{m=k}^{\infty}\sum\limits_{|\alpha|=m}q_{\alpha}z^{\alpha}\right).
\end{align*}
It then follows that
\begin{align}
	\label{Eq-3.23} \sum\limits_{|\alpha|=k} \left(p_{\alpha}-2q_{\alpha}\right)z^{\alpha}+\text{higher degree terms of}\; (z_1^{\alpha_1},\ldots,z_n^{\alpha_n})\equiv 0,
\end{align}
holds for all $z\in \mathbb{P}\Delta(0_n;1_n)$. Then by a simple calculation, we get from \eqref{Eq-3.23} that 
\begin{align}
	\label{Eq-3.24}\sum\limits_{|\alpha|=k} \left(p_{\alpha}-2q_{\alpha}\right)=0.
\end{align}

Using Lemma \ref{Lm-3.1} to \eqref{Eq-3.21}, we have $|q_{\alpha}|\leq 1$
for multi-index $\alpha$ and so from \eqref{Eq-3.24}, we conclude that
\begin{align}\label{Th6:1.3}
	\sum\limits_{|\alpha|=k} |p_{\alpha}|\leq 2\sum\limits_{|\alpha|=k} |q_{\alpha}|\leq 2\sum\limits_{|\alpha|=k} 1\leq 2\binom{k+n-1}{n-1},
\end{align}
where $\alpha=(\alpha_1,\alpha_2,\ldots,\alpha_n)$ such that $|\alpha|=k$.\vspace{1.2mm}


On the other hand, from (\ref{Th6:1.2}) and (\ref{Th6:1.2a}), we have
\begin{align}\label{Th6:1.4}
\sum\limits_{l=1}^n\sum\limits_{j=1}^n\sum\limits_{k=1}^nz_l\frac{\partial^2 F_{\varepsilon}(z)}{\partial z_j\partial z_k}=M\sum\limits_{m=1}^{\infty}\sum\limits_{|\alpha|=m}p_{\alpha}z^{\alpha}.
\end{align}

It is clear that
\begin{align*}
F_{\varepsilon}(z)=\sum\limits_{k=1}^n z_j+\sum\limits_{m=2}^{\infty}\left(P_m(z)+\varepsilon Q_m(z)\right),
\end{align*}
for all $z\in \mathbb{P} \Delta(0_n;1_n)$, where $P_m(z)$ and $Q_m(z)$ are defined in (\ref{Th5:1.3}). Clearly
\begin{align*}
z_l\frac{\partial^2 F_{\varepsilon}(z)}{\partial z_j\partial z_k}=&
\sum\limits_{m=2}^{\infty} z_l\frac{\partial^2 \left(P_m(z)+\varepsilon Q_m(z)\right)}{\partial z_j\partial z_k}\\=&\sum\limits_{m=2}^{\infty}\sum\limits_{|\alpha|=m}\alpha_{jk} c_{\alpha}z_1^{\alpha_1}\ldots z_{j-1}^{\alpha_{j-1}}z_{j}^{\alpha_j^*}z_{j+1}^{\alpha_{j+1}}\ldots z_{k-1}^{\alpha_{k-1}}z_{k}^{\alpha_k^{**}}z_{k+1}^{\alpha_{k+1}}\ldots z_l^{\alpha_l+1}\ldots z_n^{\alpha_n},\nonumber
\end{align*}
where $c_{\alpha}=a_{\alpha}+\varepsilon b_{\alpha}$ and $\alpha_{jk}$, $\alpha^*_j$ and $\alpha^{**}_k$ are defined as in (\ref{ll.1}) and (\ref{ll.2}) respectively. Consequently
\begin{align}\label{Th6:1.5}
&\sum\limits_{l=1}^n\sum\limits_{j=1}^n\sum\limits_{k=1}^nz_l\frac{\partial^2 F_{\varepsilon}(z)}{\partial z_j\partial z_k}\\=&
\sum\limits_{m=2}^{\infty}\sum\limits_{|\alpha|=m}\sum\limits_{l=1}^n\sum\limits_{j=1}^n\sum\limits_{k=1}^n\alpha_{jk} c_{\alpha}z_1^{\alpha_1}\ldots z_{j-1}^{\alpha_{j-1}}z_{j}^{\alpha_j^*}z_{j+1}^{\alpha_{j+1}}\ldots z_{k-1}^{\alpha_{k-1}}z_{k}^{\alpha_k^{**}}z_{k+1}^{\alpha_{k+1}}\ldots z_l^{\alpha_l+1}\ldots z_n^{\alpha_n}.\nonumber
\end{align}

Now from (\ref{Th6:1.4}) and (\ref{Th6:1.5}), we obtain
\begin{align}\label{Th6:1.5a}
&\sum\limits_{|\alpha|=m}\sum\limits_{l=1}^n\sum\limits_{j=1}^n\sum\limits_{k=1}^n\alpha_{jk} c_{\alpha}z_1^{\alpha_1}\ldots z_{j-1}^{\alpha_{j-1}}z_{j}^{\alpha_j^*}z_{j+1}^{\alpha_{j+1}}\ldots z_{k-1}^{\alpha_{k-1}}z_{k}^{\alpha_k^{**}}z_{k+1}^{\alpha_{k+1}}\ldots z_l^{\alpha_l+1}\ldots z_n^{\alpha_n}\\=&M\sum\limits_{|\alpha|=m}p_{\alpha}z^{\alpha},\nonumber
\end{align}
where $\alpha=(\alpha_1,\alpha_2,\ldots,\alpha_n)$ such that $|\alpha|=m\geq 2$. Putting $z=(r,r,,\ldots,r)$ on both sides of (\ref{Th6:1.5a}) and then canceling $r$ from the both sides, we get
\begin{align*}
\sum\limits_{|\alpha|=m}\sum\limits_{l=1}^n\sum\limits_{j=1}^n\sum\limits_{k=1}^n\alpha_{jk}c_{\alpha}=M\sum\limits_{|\alpha|=m} p_{\alpha},
\end{align*}
for all $\alpha=(\alpha_1,\alpha_2,\ldots,\alpha_n)$ such that $|\alpha|=m\geq 2$. Since $c_{\alpha}=a_{\alpha}+\varepsilon b_{\alpha}$, we have 
\begin{align}\label{Th6:1.5b}
\sum\limits_{|\alpha|=m}\sum\limits_{l=1}^n\sum\limits_{j=1}^n\sum\limits_{k=1}^n\alpha_{jk}a_{\alpha}+\varepsilon \sum\limits_{|\alpha|=m}\sum\limits_{l=1}^n\sum\limits_{j=1}^n\sum\limits_{k=1}^n\alpha_{jk}b_{\alpha} =M\sum\limits_{|\alpha|=m} p_{\alpha},
\end{align}	
for all $\alpha=(\alpha_1,\alpha_2,\ldots,\alpha_n)$ such that $|\alpha|=m\geq 2$. Now in view of (\ref{Th5:1.8a}), we conclude that
\begin{align}\label{Th6:1.9}
	\sum\limits_{|\alpha|=m}\sum\limits_{l=1}^n\sum\limits_{j=1}^n\sum\limits_{k=1}^n\alpha_{jk}a_{\alpha}=nm(m-1)\sum\limits_{|\alpha|=m}a_{\alpha}
\end{align}
and
\begin{align}\label{Th6:1.10}
	\sum\limits_{|\alpha|=m}\sum\limits_{l=1}^n\sum\limits_{j=1}^n\sum\limits_{k=1}^n\alpha_{jk}b_{\alpha}=nm(m-1)\sum\limits_{|\alpha|=m}b_{\alpha}.
\end{align}

Using \eqref{Th6:1.3}, \eqref{Th6:1.9} and \eqref{Th6:1.10} to \eqref{Th6:1.5b}, we find that
\begin{align}\label{Th6:1.6}
	\left|\sum\limits_{|\alpha|=m}a_{\alpha}+\varepsilon \sum\limits_{|\alpha|=m} b_{\alpha}\right|\leq 2M\frac{\binom{m+n-1}{n-1}}{nm(m-1)},
\end{align}
for all multi-index $\alpha=(\alpha_1,\alpha_2,\ldots,\alpha_n)$ such that $|\alpha|=m\geq 2$ and for all $\varepsilon$ such that $|\varepsilon|=1$. 

Suppose $A_{\alpha}=\sum\limits_{|\alpha|=m}a_{\alpha}\neq 0$ and $B_{\alpha}=\sum\limits_{|\alpha|=m}b_{\alpha}\neq 0$. If we choose 
\begin{align*}
\varepsilon=\frac{A_{\alpha}}{|A_{\alpha}|}\frac{\ol B_{\alpha}}{|B_{\alpha}|},
\end{align*}
then $|\varepsilon|=1$ and so from (\ref{Th6:1.6}), we have
\begin{align}\label{Th6:1.7}
|A_{\alpha}+\varepsilon B_{\alpha}|=|A_{\alpha}|+|B_{\alpha}|\leq 2M\frac{\binom{m+n-1}{n-1}}{nm(m-1)}
\end{align}
for multi-index $\alpha=(\alpha_1,\alpha_2,\ldots,\alpha_n)$ such that $|\alpha|=m\geq 2$. If either $A_{\alpha}=0$ or $B_{\alpha}=0$, then (\ref{Th6:1.7}) also holds. Therefore, we have
\begin{align}\label{Th6:1.7a}
\displaystyle	\left|\sum\limits_{|\alpha|=m}a_{\alpha}\right|+\left|\sum\limits_{|\alpha|=m}b_{\alpha}\right|\leq \dfrac{2\binom{m+n-1}{n-1}M}{nm(m-1)}
\end{align}		
for multi-index $\alpha=(\alpha_1,\alpha_2,\ldots,\alpha_n)$ such that $|\alpha|=m\geq 2$.\vspace{1,2mm}

On the other hand, from \eqref{Th6:1.6}, we have
\begin{align}\label{Th6:1.8}
\left||A_{\alpha}|-|B_{\alpha}|\right|\leq |A_{\alpha}+\varepsilon B_{\alpha}|\leq \dfrac{2\binom{m+n-1}{n-1}M}{nm(m-1)}
\end{align}
for multi-index $\alpha=(\alpha_1,\alpha_2,\ldots,\alpha_n)$ such that $|\alpha|=m\geq 2$. 
Therefore, we obtain
\begin{align*}
 \displaystyle \left|\;\left|\sum\limits_{|\alpha|=m}a_{\alpha}\right|-\left|\sum\limits_{|\alpha|=m}b_{\alpha}\right|\;\right|\leq \dfrac{2\binom{m+n-1}{n-1}M}{nm(m-1)}
\end{align*}
for multi-index $\alpha=(\alpha_1,\alpha_2,\ldots,\alpha_n)$ such that $|\alpha|=m\geq 2$. 

It is clear from \eqref{Th6:1.7a} that 
\begin{align*}
\left|\sum\limits_{|\alpha|=m}a_{\alpha}\right|\leq \dfrac{2\binom{m+n-1}{n-1}M}{nm(m-1)}
\end{align*}
for multi-index $\alpha=(\alpha_1,\alpha_2,\ldots,\alpha_n)$ such that $|\alpha|=m\geq 2$.\vspace{1.2mm}

To show that the inequalities are sharp, we consider the function $f_1(z)$ defined by
\begin{align}\label{Ex1}
f_1(z)=\sum\limits_{k=1}^n z_k+\sum\limits_{m=2}^{\infty}\sum\limits_{|\alpha|=m} a_{\alpha}z^{\alpha},
\end{align}
where
\begin{align*}
a_{\alpha}=\frac{2M}{nm(m-1)},
\end{align*}
 $\alpha=(\alpha_1,\alpha_2,\ldots,\alpha_n)$ such that $|\alpha|=m\geq 2$.
 It is easy to see that $f_1\in \mathcal{P}_{\mathcal{H}_n^0}(M)$. Also for $|\alpha|=m\geq 2$, we have
\begin{align*}
\left|\sum\limits_{|\alpha|=m}a_{\alpha}(f_1)\right|=\frac{2\binom{m+n-1}{n-1}M}{nm(m-1)}.
\end{align*} 

Hence all the three inequalities are sharp for the function given by \eqref{Ex1}.
\end{proof}

In the following result, we have established the growth estimates for the functions in the class $\mathcal{P}_{\mathcal{H}_n^0}(M)$.

\begin{theo}\label{Th-8}Let $f=h+\ol g\in \mathcal{P}_{\mathcal{H}_n^0}(M)$ and be given by (\ref{Eq 1.7}). Then
\begin{enumerate}
\item[\emph{(i)}] $\displaystyle |f(z)|\geq \left|\sum\limits_{k=1}^nz_k\right|-2M\sum\limits_{m=2}^{\infty} \binom{m+n-2}{n-1}\frac{\|z\|_{\infty}^{m}}{m(m-1)}$;\vspace{1.2mm}
\item[\emph{(ii)}] $\displaystyle |f(z)|\leq \left|\sum\limits_{k=1}^nz_k\right|+2M\sum\limits_{m=2}^{\infty} \binom{m+n-2}{n-1}\frac{\|z\|_{\infty}^{m}}{m(m-1)}$.
\end{enumerate}
All the inequalities are sharp.
\end{theo}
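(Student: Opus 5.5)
The plan is to reduce the growth estimate to the coefficient bounds of Theorem \ref{Th-1.6} via the triangle inequality, the same way Theorem D follows from Theorem C in one variable. Write $f=h+\ol g$ with $h$ and $g$ as in \eqref{Th5:1.2}. For each $m\geq 2$ this gives
\begin{align*}
f(z)=\sum\limits_{k=1}^n z_k+\sum\limits_{m=2}^{\infty}\left(P_m(z)+\ol{Q_m(z)}\right),
\end{align*}
and so
\begin{align*}
\left|\sum\limits_{k=1}^nz_k\right|-\sum\limits_{m=2}^{\infty}\left(|P_m(z)|+|Q_m(z)|\right)\leq |f(z)|\leq \left|\sum\limits_{k=1}^nz_k\right|+\sum\limits_{m=2}^{\infty}\left(|P_m(z)|+|Q_m(z)|\right).
\end{align*}
Both (i) and (ii) therefore follow once we show
\begin{align*}
|P_m(z)|+|Q_m(z)|\leq \frac{2M\binom{m+n-2}{n-1}}{m(m-1)}\|z\|_\infty^m .
\end{align*}

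To bound $P_m(z)$ and $Q_m(z)$, I would not use the individual coefficients. Instead I would work with the holomorphic function $F_\varepsilon=h+\varepsilon g\in\mathcal{P}_n(M)$ supplied by Theorem \ref{Th-1.3}, restricted to complex lines. Fix $z$ with $\|z\|_\infty=r<1$, put $w=z/r$, and consider the one-variable function $\zeta\mapsto F_\varepsilon(\zeta w)$ for $\zeta\in\mathbb{D}$. Its $m$-th Taylor coefficient is $P_m(w)+\varepsilon Q_m(w)$. Next I would relate the expression $\sum_{l,j,k} z_l\,\partial^2F_\varepsilon/\partial z_j\partial z_k$, evaluated along the line, to a one-variable function with positive real part after adding $M$. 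Dividing by $M$ gives a Carathéodory-type function $p$ with $p(0)=1$, as in \eqref{Th6:1.2}. The bound \eqref{Th6:1.3} (equivalently Lemma \ref{Lm-3.1}) then controls its degree-$(m-1)$ homogeneous part: the number of monomials of degree $m-1$ is $\binom{m+n-2}{n-1}$, and each coefficient is at most $2$ in modulus, so this part is at most $2\binom{m+n-2}{n-1}\|z\|_\infty^{m-1}$ on the polydisk. Integrating this back twice along the ray, as in the passage from \eqref{Th6:1.5a} to \eqref{Th6:1.6}, produces the factor $1/(m(m-1))$ together with one more power of $\|z\|_\infty$. The result is
\begin{align*}
|P_m(z)+\varepsilon Q_m(z)|\leq \frac{2M\binom{m+n-2}{n-1}}{m(m-1)}\|z\|_\infty^m
\end{align*}
for every $|\varepsilon|=1$. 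Choosing $\varepsilon$ to align the arguments of $P_m(z)$ and $Q_m(z)$, as in \eqref{Th6:1.7}, gives the bound on $|P_m(z)|+|Q_m(z)|$. Summing over $m$ then completes the proof.

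The main obstacle is the middle step. The operator $\sum_{l,j,k} z_l\,\partial_j\partial_k$ is not the radial second derivative along the line $\zeta w$: the factor $\sum_l z_l$ is not $\zeta$. So the identification of homogeneous parts has to be done carefully. I would first try to prove the pointwise bound only at diagonal points $z=(t,\dots,t)$, where the paper's own computation (substituting $z=(r,\dots,r)$) applies and $\sum_l z_l=nt$, and then pass to general $z$ by bounding each monomial by $\|z\|_\infty^m$ and using the count $\binom{m+n-2}{n-1}$.

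For sharpness I would take the extremal $f_1$ from \eqref{Ex1} together with its variant with negative coefficients. Evaluating these at $z=(r,\dots,r)$, with $r$ real and positive, should make all terms align, giving equality in (ii) and in (i) respectively.
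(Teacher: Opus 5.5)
The step that fails is the middle one, and it cannot be repaired. Write $\sigma(z)=\sum_{l=1}^n z_l$ and $D=\sum_{j=1}^n \partial/\partial z_j$. Then the defining expression is
\[
\sum_{l=1}^n\sum_{j=1}^n\sum_{k=1}^n z_l\frac{\partial^2 F_\varepsilon(z)}{\partial z_j\partial z_k}=\sigma(z)\,D^2F_\varepsilon(z),
\]
so the hypothesis only controls the second derivative of $F_\varepsilon$ in the single direction $\mathbf{1}=(1,\dots,1)$. For $n\ge 2$ and $m\ge 2$, the map $P\mapsto \sigma D^2P$ on homogeneous polynomials of degree $m$ has a nontrivial kernel; it contains $(z_1-z_2)^m$ and $\sigma(z)(z_1-z_2)^{m-1}$. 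Hence nothing you learn about the Carath\'eodory function $1+\sigma D^2F_\varepsilon/M$ can be ``integrated back'' to a bound on $P_m$ or $Q_m$ off the diagonal. Your fallback does not help either. Bounding monomials one by one needs a bound on $\sum_{|\alpha|=m}|a_\alpha|$, whereas the diagonal only tells you about $P_m(\mathbf{1})=\sum_{|\alpha|=m}a_\alpha$.

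In fact the statement itself is false for $n\ge 2$. For any $c\in\mathbb{C}$, let $h(z)=z_1+\dots+z_n+c(z_1-z_2)^2$ and $g\equiv 0$. Then $\sum_{j,k}\partial^2h/\partial z_j\partial z_k=2c+2c-2c-2c=0$, so $f=h\in\mathcal{P}_{\mathcal{H}_n^0}(M)$.
\begin{itemize}
\item At $z=(r,-r,0,\dots,0)$ we have $\sum_k z_k=0$ and $|f(z)|=4|c|r^2$. This exceeds the right-hand side of (ii) once $|c|$ is large.
\item At $z=(r,0,\dots,0)$ with $c=-1/r$ we get $f(z)=0$, while the right-hand side of (i) equals $r-Mnr^2-O(r^3)>0$ for small $r$.
\end{itemize}
So your intermediate inequality $|P_m(z)|+|Q_m(z)|\le 2M\binom{m+n-2}{n-1}\|z\|_\infty^m/(m(m-1))$ is false as well.

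The paper's proof has exactly the defect you flagged. At \eqref{Th8:1.8} it identifies $\sigma D^2F_\varepsilon$ along the line $t\mapsto tz/\|z\|_\infty$ with $t\,\tilde F_\varepsilon''(t)$, without justification. Its lower-bound step, bounding $|\int_\Gamma\cdots|$ from below by $\int(|\tilde h'|-|\tilde g'|)$, is also invalid; your triangle-inequality treatment of (i) is sounder as a method.

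What a line restriction does prove is the diagonal case. The function $\phi(\zeta)=F_\varepsilon(\zeta\mathbf{1})$ satisfies $\Re(n\zeta\phi''(\zeta))>-M$ on $\mathbb{D}$. This gives $|P_m(\mathbf{1})|+|Q_m(\mathbf{1})|\le 2M/(nm(m-1))$, and hence (i) and (ii) only at points $z=(t,\dots,t)$. For $n=1$ your argument is just the derivation of Theorem D from Theorem C.

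Your sharpness plan also breaks down. The function $f_1$ from \eqref{Ex1} is not in the class for $n\ge 2$: on the diagonal, $\sigma D^2f_1=2M\sum_{m\ge2}\binom{m+n-1}{n-1}\zeta^{m-1}$, which tends to $2M(1-n-2^{-n})<-M$ as $\zeta\to-1$. Even formally, $f_1(r,\dots,r)$ carries the factor $\binom{m+n-1}{n-1}/n$, not $\binom{m+n-2}{n-1}$, in front of $2Mr^m/(m(m-1))$.
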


\begin{proof} Let $f=h+\ol g\in \mathcal{P}_{\mathcal{H}_n^0}(M)$. Then by Theorem \ref{Th-1.3}, we see that the function $F_{\varepsilon}=h+\varepsilon g$ belongs to $\mathcal{P}_{n}(M)$ for each $\varepsilon\; (|\varepsilon|=1)$, and furthermore  (\ref{Th6:1.1}) holds
for all $z=(z_1,z_2,\ldots,z_n)\in \mathbb{P} \Delta(0_n;1_n)$. Then there exists a holomorphic function $\hat P(z)$ in $\mathbb{P}\Delta(0_n;1_n)$ of the form
\begin{align}\label{Th8:1.1}
\hat P(z)=1+\sum\limits_{k=1}^{\infty}\hat P_k(z)
\end{align}
where 
\begin{align}\label{Th8:1.1a}
\hat P_k(z)=\sum\limits_{|\alpha|=k}p_{\alpha}z^{\alpha}
\end{align}
is a homogeneous polynomial of degree $k$ such that $\Re\{\hat P(z)\} > 0$ and 
\begin{align}\label{Th8:1.2}
\frac{\sum\limits_{l=1}^n\sum\limits_{j=1}^n\sum\limits_{k=1}^nz_l\frac{\partial^2 F_{\varepsilon}(z)}{\partial z_j\partial z_k}+M}{M}=\hat P(z)
\end{align}
in $\mathbb{P}\Delta(0_n;1_n)$. Using (\ref{Th8:1.1}) to (\ref{Th8:1.2}), we get
\begin{align}\label{Th8:1.3}
\sum\limits_{l=1}^n\sum\limits_{j=1}^n\sum\limits_{k=1}^nz_l\frac{\partial^2 F_{\varepsilon}(z)}{\partial z_j\partial z_k}=M\sum\limits_{k=1}^{\infty}\hat P_k(z)
\end{align}
for all $z\in \mathbb{P}\Delta(0_n;1_n)$. In view of (\ref{Th8:1.1a}) and using \eqref{Th6:1.3} to (\ref{Th8:1.1}), we obtain 
\begin{align}\label{Th8:1.4}
\sum\limits_{|\alpha|=m} |p_{\alpha}|\leq 2\binom{m+n-1}{n-1},
\end{align}
where $\alpha=(\alpha_1,\alpha_2,\ldots,\alpha_n)$ such that $|\alpha|=m\geq 1$. 

Let $z\in \mathbb{P}\Delta(0_n;1_n)$ be fixed in such a way that $z\neq 0$. Obviously $\|z\|_{\infty}<1$. 
We define 
\begin{align}\label{Th8:1.6}
	\tilde h(t)=h\left(\frac{z}{||z||_{\infty}}t\right)=\frac{1}{||z||_{\infty}}\left(\sum\limits_{k=1}^n z_k\right)t+\sum\limits_{m=2}^{\infty}P_m\left(\frac{z}{||z||_{\infty}}\right)t^m
\end{align}
and
\begin{align}\label{Th8:1.6a}
 \tilde g(t)=g\left(\frac{z}{||z||_{\infty}}t\right)=\sum\limits_{k=2}^{\infty} Q_m\left(\frac{z}{||z||_{\infty}}\right)t^m,
\end{align}
where $h(z)$ and $g(z)$ are defined by (\ref{Th5:1.2}) and $t\in\mathbb{C}$ such that $|t|<1$. For a fixed $z\in \mathbb{P}\Delta(0_n;1_n)$, we can say that $\tilde g(t)$ and $\tilde h(t)$ are analytic in $|t|<1$. Obviously for a fixed $z\in \mathbb{P}\Delta(0_n;1_n)$, the following function
\begin{align}\label{Th8:1.7}
\tilde F_{\varepsilon}(t)=F_{\varepsilon}\left(\frac{z}{||z||_{\infty}}t\right)=\tilde h(t)+\varepsilon \tilde g(t)
\end{align}
is analytic in $|t|<1$, where $\tilde h(t)$ and $\tilde g(t)$ are defined by (\ref{Th8:1.6}) and \eqref{Th8:1.6a} respectively. Again from (\ref{Th8:1.6}) and (\ref{Th8:1.6a}), we see that
\begin{align}\label{Th8:1.15}
	\tilde f(t)=f\left(\frac{z}{||z||_{\infty}}t\right)=\tilde h(t)+\ol{\tilde g(t)}.
\end{align}

In term of one variable $t$, we deduce from (\ref{Th8:1.3}) that 
\begin{align}\label{Th8:1.8}
t\frac{d^2 \tilde F_{\varepsilon}(t)}{d t^2}=M\sum\limits_{k=1}^{\infty}\hat P_{k}\left(\frac{z}{||z||_{\infty}}\right)t^k.
\end{align}

Applying (\ref{Th8:1.4}) to (\ref{Th8:1.1a}), we get
\begin{align}\label{Th8:1.5}
\left|\hat P_k\left(\frac{z}{||z||_{\infty}}\right)\right|\leq \sum\limits_{|\alpha|=k} |p_{\alpha}|\leq 2\binom{k+n-1}{n-1},
\end{align}
for all $z\in \mathbb{P}\Delta(0;1)$, where $\alpha=(\alpha_1,\alpha_2,\ldots,\alpha_n)$ such that $|\alpha|=k\geq 1$.
On the other hand, using (\ref{Th8:1.6}) and (\ref{Th8:1.6a}) to (\ref{Th8:1.7}), we get
\begin{align*}
\frac{d \tilde F_{\varepsilon}(0)}{d t}=\frac{z_1+z_2+\ldots+z_n}{||z||_{\infty}}.
\end{align*}

Therefore
\begin{align}\label{Th8:1.9}
\frac{d \tilde F_{\varepsilon}(t)}{d t}-\frac{z_1+z_2+\ldots+z_n}{||z||_{\infty}}=\frac{d \tilde F_{\varepsilon}(t)}{d t}-\frac{d \tilde F_{\varepsilon}(0)}{d t}=\int\limits_{0}^{t} \frac{d^2 \tilde F_{\varepsilon}(s)}{d s^2} ds.
\end{align}

In view of (\ref{Th8:1.8}) and (\ref{Th8:1.5}), we obtain from (\ref{Th8:1.9}) that
\begin{align}\label{Th8:1.10}
\left|\;\left|\frac{d \tilde F_{\varepsilon}(t)}{d t}\right|-\left|\sum\limits_{k=1}^nz_k/||z||_{\infty}\right|\;\right|\leq&\left|\frac{d \tilde F_{\varepsilon}(t)}{d t}-\sum\limits_{k=1}^n z_k/||z||_{\infty}\right|\\=&\left|\int\limits_{0}^{t}\frac{d^2 F_{\varepsilon}(s)}{d s^2}ds\right|\nonumber\\=& 
M\left|\int\limits_{0}^{t}\sum\limits_{k=1}^{\infty}\hat P_k\left(\frac{z}{||z||_{\infty}}\right)s^{k-1} ds\right|\nonumber \\=& 
M\left|\int\limits_{0}^{t}\sum\limits_{k=1}^{\infty}\hat P_k\left(\frac{z}{||z||_{\infty}}\right)\xi^{k-1}e^{ik\theta} d\xi\right|\nonumber\\ \leq&
M\int\limits_{0}^{|t|} \sum\limits_{k=1}^{\infty}\left|\hat P_k\left(\frac{z}{||z||_{\infty}}\right)\right|\xi^{k-1}d\xi\nonumber\\\leq &
2M\sum\limits_{k=1}^{\infty} \binom{k+n-1}{n-1}\frac{|t|^{k}}{k}.\nonumber
\end{align}

Thus from (\ref{Th8:1.7}) and (\ref{Th8:1.10}), we obtain
\begin{align}\label{Th8:1.11}
\left|\frac{d \tilde F_{\varepsilon}(t)}{d t}\right|=\left|\frac{d \tilde h(t)}{d t}+\varepsilon \frac{d \tilde g(t)}{d t}\right|\leq \left|\sum\limits_{k=1}^nz_k/||z||_{\infty}\right|+2M\sum\limits_{k=1}^{\infty} \binom{k+n-1}{n-1}\frac{|t|^{k}}{k}
\end{align}
and
\begin{align}\label{Th8:1.12}
\left|\frac{d \tilde F_{\varepsilon}(t)}{d t}\right|=\left|\frac{d \tilde h(t)}{d t}+\varepsilon \frac{d \tilde g(t)}{d t}\right|\geq \left|\sum\limits_{k=1}^nz_k/||z||_{\infty}\right|-2M\sum\limits_{k=1}^{\infty} \binom{k+n-1}{n-1}\frac{|t|^{k}}{k}.
\end{align}
 
Since $\varepsilon\;(|\varepsilon|=1)$ is arbitrary, it follows from (\ref{Th8:1.11}) and (\ref{Th8:1.12}) that
\begin{align}\label{Th8:1.13}
\left|\frac{d \tilde h(t)}{d t}\right|+\left|\frac{d \tilde g(t)}{d t}\right|\leq \left|\sum\limits_{k=1}^nz_k/||z||_{\infty}\right|+2M\sum\limits_{k=1}^{\infty} \binom{k+n-1}{n-1}\frac{|t|^{k}}{k}
\end{align}
and
\begin{align}\label{Th8:1.14}
\left|\frac{d \tilde h(t)}{d t}\right|-\left|\frac{d \tilde g(t)}{d t}\right|\geq \left|\sum\limits_{k=1}^nz_k/||z||_{\infty}\right|-2M\sum\limits_{k=1}^{\infty} \binom{k+n-1}{n-1}\frac{|t|^{k}}{k}.
\end{align}

Let $\Gamma$ be the radial segment from $0$ to $||z||_{\infty}$. Then from (\ref{Th8:1.15}) and (\ref{Th8:1.13}), we have
\begin{align}\label{Th8:1.16}
|\tilde f(\|z\|_{\infty})|=&\left|\int_{\Gamma} \frac{\partial \tilde f(s)}{\partial s}d s+\frac{\partial \tilde f(s)}{\partial \ol s}d \ol s\right|\\\leq&
\int\limits_{0}^{||z||_{\infty}}\left\lbrack \left|\frac{d \tilde h(s)}{d s}\right|+\left|\frac{d \tilde g(s)}{d s}\right|\right\rbrack |d s|\nonumber\\\leq&
\int\limits_{0}^{||z||_{\infty}}\left(\left|\sum\limits_{k=1}^nz_k/||z||_{\infty}\right|+2M\sum\limits_{k=1}^{\infty} \binom{k+n-1}{n-1}\frac{\xi^{k}}{k}\right)d\xi\nonumber\\=&\left|\sum\limits_{k=1}^nz_k/||z||_{\infty}\right|\times||z||_{\infty}+2M\sum\limits_{k=1}^{\infty} \binom{k+n-1}{n-1}\frac{||z||_{\infty}^{k+1}}{k(k+1)}\nonumber\\=&
\left|\sum\limits_{k=1}^nz_k\right|+2M\sum\limits_{m=2}^{\infty} \binom{m+n-2}{n-1}\frac{||z||_{\infty}^{m}}{m(m-1)}.\nonumber
\end{align}

Consequently from (\ref{Th8:1.15}) and (\ref{Th8:1.16}), we get
\begin{align*}
|f(z)|\leq\left|\sum\limits_{k=1}^nz_k\right|+2M\sum\limits_{m=2}^{\infty} \binom{m+n-2}{n-1}\frac{\|z\|_{\infty}^{m}}{m(m-1)},
\end{align*}
which also holds for $z=0$.

Again from (\ref{Th8:1.15}) and (\ref{Th8:1.14}), we have
\begin{align*}
|\tilde f(\|z\|_{\infty})|=\left|\int_{\Gamma} \frac{\partial \tilde f(s)}{\partial s}d s+\frac{\partial \tilde f(s)}{\partial \ol s}d \ol s\right|\geq&
\int\limits_{0}^{||z||_{\infty}}\left\lbrack \left|\frac{d \tilde h(s)}{d s}\right|-\left|\frac{d \tilde g(s)}{d s}\right|\right\rbrack |d s|\nonumber\\\geq&
\left|\sum\limits_{k=1}^nz_k\right|-2M\sum\limits_{m=2}^{\infty} \binom{m+n-2}{n-1}\frac{||z||_{\infty}^{m}}{m(m-1)}.\nonumber
\end{align*}
Consequently
\begin{align*}
|f(z)|\geq\left|\sum\limits_{k=1}^nz_k\right|-2M\sum\limits_{m=2}^{\infty} \binom{m+n-2}{n-1}\frac{\|z\|_{\infty}^{m}}{m(m-1)},
\end{align*}
which also holds for $z=0$.

Thus the desired inequalities are established.\vspace{2mm}

To show the inequality $(i)$ is sharp, we consider the function $f_2(z)$ defined by
\begin{align*}
	f_2(z)=\sum\limits_{j=1}^2 z_j+\sum\limits_{m=2}^{\infty}\sum\limits_{|\beta|=m}a_{\alpha}z^{\alpha},
\end{align*}
where
\begin{align*}
	a_{\alpha}=-\frac{2M}{(m-1)(m+1)}
\end{align*}
for $\alpha=(\alpha_1,\ldots,\alpha_n)$ such that $|\alpha|=m\geq 2$. It is easy to verify that $f_2\in \mathcal{P}_{\mathcal{H}_2^0}(M)$. Now for the point $z=(r,r)$, where $0<r<1$, we find that
\begin{align*}
|f_2(z)|=f_2(z)=&2r-2M\sum\limits_{m=2}^{\infty}\frac{\|z\|_{\infty}^m}{(m-1)(m+1)}\sum\limits_{|\alpha|=m} 1\\=&
2r-2M\sum\limits_{m=2}^{\infty}\frac{r^m}{m-1}\\=&
2r\left(1+M\log (1-r)\right)\\=&
\left|\sum\limits_{k=1}^nz_k\right|-2M\sum\limits_{m=2}^{\infty} \binom{m+n-2}{n-1}\frac{\|z\|_{\infty}^{m}}{m(m-1)}
\end{align*}
for $n=2$. Therefore the inequality $(i)$ is sharp.\vspace{1.2mm}

Next to show the inequality $(ii)$ is sharp, we consider the function $f_3(z)$ defined by
\begin{align*}
	f_3(z)=\sum\limits_{j=1}^n z_j+\sum\limits_{m=2}^{\infty}\sum\limits_{|\alpha|=m}b_{\alpha}z^{\alpha},
\end{align*}
where
\begin{align*}
	b_{\alpha}=\frac{2M\binom{m+n-2}{n-1}}{m(m-1)\binom{m+n-1}{n-1}}
\end{align*}
for $\alpha=(\alpha_1,\ldots,\alpha_n)$ such that $|\alpha|=m\geq 2$. It is easy to verify that $f_3\in \mathcal{P}_{\mathcal{H}_n^0}(M)$.
For the point $z=(r,r,\ldots,r)$, where $r<1$, we have
\begin{align*}
	|f_3(z)|=\left|\sum\limits_{k=1}^nz_k\right|+M\sum\limits_{m=2}^{\infty}\binom{m+n-2}{n-1} \frac{||z||_{\infty}^m}{m(m-1)},
\end{align*}
which shows that the inequality $(ii)$ is sharp. 
\end{proof}

\vspace{5mm}

\noindent\textbf{Conflict of interest:} The authors declare that there is no conflict  of interest regarding the publication of this paper.\vspace{1.2mm}

\noindent {\bf Funding:} Not Applicable.\vspace{1.2mm}

\noindent\textbf{Data availability statement:}  Data sharing not applicable to this article as no datasets were generated or analysed during the current study.\vspace{1.2mm}

\noindent {\bf Authors' contributions:} All the authors have equal contributions in preparation of the manuscript.

\end{document}